\newtheorem{definition}{Definition}[section]
\newtheorem{remark}[definition]{Remark}
\newtheorem{example}[definition]{Example}
\newtheorem{lemma}[definition]{Lemma}
\newtheorem{proposition}[definition]{Proposition}
\newtheorem{theorem}[definition]{Theorem}
\newtheorem{corollary}[definition]{Corollary}
\def\P{{\mathbb{P}}}
\def\N{{\mathbb{N}}}
\def\K{{\mathbb{K}}}
\def\Syzy{{\mbox{Syz}}}
\begin{document}
\title{Minimal solutions of the rational interpolation problem}
\author{Teresa Cortadellas Ben\'itez}
\address{Universitat de Barcelona, Facultat d'Educaci\'o.
Passeig de la Vall d'Hebron 171,
08035 Barcelona, Spain}
\email{terecortadellas@ub.edu}

\author{Carlos D'Andrea}
\address{Universitat de Barcelona, Departament de Matem\`atiques i Inform\`atica,
  Gran Via de les Corts Catalanes 585,
 08007 Barcelona,
 Spain} \email{cdandrea@ub.edu}
\urladdr{http://www.ub.edu/arcades/cdandrea.html}

\author{Eul\`alia Montoro}
\address{Universitat de Barcelona, Departament de Matem\`atiques i Inform\`atica,
 Gran Via de les Corts Catalanes 585,
 08007 Barcelona,
 Spain}
\email{eula.montoro@ub.edu}

\thanks{T.~Cortadellas is supported by the Spanish research project  MTM2016-78881-P, C.~D'Andrea and E.~Montoro are supported by the Spanish MINECO/FEDER
research project MTM 2015-65361-P. C.~D'Andrea is also supported by the ``Mar\'ia de Maeztu'' Programme for Units of Excellence in R\&D (MDM-2014-0445), and by the European Union's Horizon 2020 research and innovation programme under the Marie Sklodowska-Curie grant agreement No. 675789.}


\subjclass[2010]{Primary: 41A20. Secondaries: 13D02,11Y50, 68W30}

\keywords{Rational Interpolation, Syzygies, Extended Euclidean Algorithm, minimal degree, $\mu$-basis}

\begin{abstract}
We explore connections between the approach of solving the rational interpolation problem via resolutions of ideals and syzygies with the standard method provided by the Extended Euclidean Algorithm.  As a consequence, we obtain explicit descriptions for solutions of  ``minimal'' degrees in terms of the degrees of elements appearing in the EEA.  This allows us to describe the minimal degree in a $\mu$-basis of a polynomial planar parametrization in terms of  a ``critical'' degree arising in the EEA.
\end{abstract}
\maketitle

\section{Introduction}
Let $\K$ be a field, $l,n_1,\dots, n_l$ positive integers, $n:=n_1+\cdots +n_l$, and
\begin{equation}
\label{data}
 (x_i, y_{i,j})\in\K^2, \mbox{ for } i=1,\dots l,\quad j=0,\dots n_i-1
 \end{equation}
 with  $x_i\neq x_j$ if $i\neq j.$ An interpolating rational function associated to this data is a function  $y(x)\in\K(x)$ satisfying

\begin{equation}
\label{interpolant}
 y^{(j)}(x_i)=y_{i,j}, \qquad i=1,\dots l, \quad j=0,\dots n_i-1.
 \end{equation}
In particular, we are requiring that the rational function  $y(x)$ is defined on all the points $x_i,\, i=1,\ldots, l.$

The {\em rational interpolation problem} asks to describe  all the rational functions verifying \eqref{interpolant}.   Note that in principle the well known interpolating polynomial associated to the data \eqref{data} is always a solution of \eqref{interpolant}, so this problem is always solvable. Are there more solutions? How many? Can we ``parameterize'' them? Is there a ``minimal'' or ``compact'' solution?

The rational interpolation has been well studied in the last centuries, with references going back to the mid 1800's (\cite{ca41,ros45,pre53}).  In the last decades, the interest in this problem focused in more algorithmic and computational aspects due to the increasing growth of these areas of research and applications:  see for instance \cite{Sal62, Kah69, Wuy75, BL00, TF00} and the references therein. A unified framework, which relates the rational interpolation problem with the Euclidean Algorithm, is presented in \cite{ant88}, and also in the book \cite[Section 5.7]{vzgg13}, where it is called called \emph{rational function reconstruction.} There are also  explicit closed formulae in terms of the input data that can be derived by operating with symmetric operators or subresultants, see  \cite{las03,DKS15}.


In this paper, we focus on the possible {\em degrees} that a solution of \eqref{interpolant} can have, and detect the minimal of these degrees in optimal time. To be more precise, a  {\em degree} for a rational function $y(x)=\frac{a(x)}{b(x)}$, with coprime $a(x), b(x)\in\K[x]$, can be  defined, among others, as
 \begin{align*}
\delta (y(x)) =& \max \{ \deg a(x), \deg b(x)\} \\ \intertext{ or }
\kappa (y(x)) =& \deg a(x) +\deg b(x).
\end{align*}

Fixed one of these degrees, we  say that $\gamma\in\N$ is an {\em admissible degree} if there exists an interpolating function of degree $\gamma$. A natural question to ask is which numbers are admissible degrees, and in particular to characterize the smallest of them. A minimal solution of the rational interpolation problem is a solution of minimal degree.  Also, it is of interest to parameterize all the admissible rational functions.
For instance, for the data

\[
\begin{array}{lll}
x_1=0, &y_{1,0}=-2 & \\
x_2=2, &y_{2,0}=6& \\
x_3=-1, &y_{3,0}=-3 \\
x_3=-1, & y_{3,1}=3 ,
\end{array}
\]
it turns out that its  $\delta$-minimal degree is $2$ and the fractions $\frac{-2-3\lambda x^2}{\frac{-x^2}{3}+1-\lambda x}$ define the  minimal interpolating functions for any $\lambda\in \K$, $\lambda\neq -\frac{1}{6}, -\frac{2}{3}$. Moreover, the interpolating functions $y(x)=\frac{a(x)}{b(x)}$ with $\delta (y(x))=\delta>2$ can be parameterized as
\begin{equation*}
y(x)=\frac{(\lambda_0+\cdots +\lambda_{\delta-2}x^{\delta-2})r_2(x)+ (\lambda'_0+\cdots +\lambda'_{\delta-3}x^{\delta-3} + x^{\delta-2} )r_3(x)  }{(\lambda_0+\cdots +\lambda_{\delta-2}x^{\delta-2})s_2(x)+ (\lambda'_0+\cdots +\lambda'_{\delta-3}x^{\delta-3} + x^{\delta-2})s_3(x)}
\end{equation*}
for suitable polynomials $r_2(x), r_3(x), s_2(x), s_3(x)\in\K[x]$
provided that the denominator does not vanish. This example appears in \cite[Example $3.8a$]{ant88}, see also Example \ref{example1} for details on these calculations.

The problem of describing the admissible degrees for the interpolation problem was tackled in \cite{antand86} for $\delta$ and, in \cite{ant88} for $\kappa.$ The main tool in the first mentioned paper is a divided-differences matrix, whereas the latter uses  the Euclidean Algorithm to solve this problem. Later, in \cite{ABKW90} the solutions of the classical (i.e. when $n_i=1 \ \forall i=1,\ldots, l$) rational interpolation problem were given as the kernel of a matrix encoding the data of the problem. In \cite{ra97} this matrix was homogenized, and  the numerical invariants of a minimal free resolution of its cokernel  used to tackle the problem.

In  this paper, we connect the results obtained in \cite{ra97} via resolutions of ideals and syzygies with the now standard and fast approach given by the Extended Euclidean Algorithm.
As a consequence, we give explicit descriptions for both $\delta$ and $\kappa$ minimal degrees in terms of the degrees of elements appearing in the EEA. Of course if one is interested in a ``fast method'' to solve any instance of the interpolation problem, the EEA is the most efficient tool available, and any improvement in dealing with this problem will also get translated into a faster algorithm to solve the EEA. In this sense, the two situations (solving the rational interpolation problem and computing the EEA of two polynomials) are equivalent from an algorithmic and complexity  point of view. But this is not our focus. We are interested in exploring connections with syzygies and free resolutions because in the last decades the latter tools have been used to deal with other kind of geometric problems like implicitization of rational parametrizations, or the description of some invariantes associated to them. Hence, any dictionary between these methods brings the potential of shedding some light to situations different than the rational interpolation problem per se. As an example of this, in Section \ref{mb}  we obtain a simple description  of the value of ``$\mu$'' for a $\mu$-basis of a polynomial plane parameterization. This also shows that one can compute both $\mu$ and the $\mu$-basis in considerable fast time. It would be interesting to explore whether this result can be generalized for any rational parametrization, as it would reduce the complexity of computing $\mu$ and $\mu$-bases considerably. For the latest state of the art in this area, see \cite{HHK17} and the references therein.

We also treat with our methods the {\em Hermite rational interpolation problem}, which consists in, for a given $d\in\N,\, 0\leq d<n,$ decide if there exist, and if so compute, polynomials $a(x),b(x)$ of degrees bounded by $d$ and $n-d-1$ respectively, such that $\frac{a(x)}{b(x)}$ interpolates the data (see \cite{CDM18} for more on this problem). This is done at the end of Section \ref{ccappa}.

The paper is organized as follows: in Section \ref{EA} we review some basics on the Hermite interpolating polynomial and the Euclidean Algorithm.  In Section \ref{3}, we introduce the language of syzygies which help us obtain a ``minimal basis'' associated to the rational interpolation problem. We show in Theorem \ref{nininal} -which is a generalization of \cite[Theorem 2.11]{ABKW90} to the case of interpolation with multiplicities- that these minimal bases allow us to make the $\delta$-minimal degree explicit. Then we turn to make even more explicit this invariant by means of the EEA. This is the content of Theorem \ref{mtt}, where we extract a minimal basis from some ``critical value'' in the sequence of degrees in the EEA.

In Section \ref{mb} we apply these results to make explicit the value of $\mu$ for a $\mu$-basis of a plane polynomial parametrization. In Theorem \ref{mu}
such $\mu$ is expressed as a critical value in the sequence of degrees in a suitable EEA. We conclude the paper by applying our tools to study the $\kappa$-degree in Section \ref{ccappa}, where we recover  the results of Antoulas in \cite{ant88}  with our methods.



\bigskip
\section{Hermite interpolating polynomial and Euclidean algorithm}\label{EA}

All along this paper $\K$ is an infinite field of characteristic either zero or larger than  $\max \{n_1,\dots n_l\}$. This assumption certifies the existence of the {\em Hermite interpolating polynomial} associated to \eqref{data}, which is the unique interpolating polynomial of  this data having degree smaller than $n.$  We denote it by $g(x)$.

It is straightforward to check that any rational interpolating function $y(x)=\frac{a(x)}{b(x)}$ satisfies
\begin{equation}
\label{weak}
(a(x)-b(x)g(x))^{(j)}(x_i)=0, \quad i=1,\dots l, \quad j=0,\dots n_i-1,
\end{equation}
and so,
$(x-x_i)^{n_i}$ divides $a(x)-b(x)g(x)$ for $ i=1,\dots l$. Thus, if we set $f(x):=(x-x_1)^{n_1}\cdots (x-x_l)^{n_l}$, there exists $c(x)\in\K[x]$ such that
\begin{equation}
\label{eq}
 a(x)=b(x)g(x) +c(x)f(x).
 \end{equation}
 In fact, \eqref{weak} and  \eqref{eq} are equivalent,
 and we refer to them, indistinctly, as {\em the weak interpolation conditions.} We have that, $y(x)=\frac{a(x)}{b(x)}$ is a interpolating function if and only if  the pair $(a(x),b(x))$ satisfies the weak interpolation conditions and $b(x_i)\neq 0$, for $i=1,\dots, l$.

The following result follows straightforwardly.
\begin{proposition}\label{above}
If the pair $(a(x),b(x))$ defines an interpolating function then, for any polynomial $c(x)$ without roots in the set $\{x_1,\dots, x_l\}$, the pair $(a(x)c(x),b(x)c(x))$ also defines an interpolating function. Reciprocally, if $(a(x)c(x),b(x)c(x))$ defines an interpolating function, then $(a(x),b(x))$ defines an interpolating function.
\end{proposition}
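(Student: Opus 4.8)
The plan is to reduce both implications to the characterization recalled just above the statement: a pair $(a(x),b(x))$ defines an interpolating function if and only if it satisfies the weak interpolation conditions \eqref{weak} and $b(x_i)\neq 0$ for $i=1,\dots,l$. Recall moreover that the weak conditions are equivalent to the divisibility $f(x)\mid a(x)-b(x)g(x)$, where $f(x)=(x-x_1)^{n_1}\cdots(x-x_l)^{n_l}$. The only structural fact I need beyond this is that a polynomial $c(x)$ without roots in $\{x_1,\dots,x_l\}$ is coprime to $f(x)$, since the irreducible factors of $f(x)$ are exactly the linear forms $(x-x_i)$.

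For the forward implication I would assume that $(a,b)$ defines an interpolating function, so that $f\mid a-bg$ and $b(x_i)\neq 0$ for all $i$. Multiplying through by $c$ gives
\[
a(x)c(x)-b(x)c(x)g(x)=c(x)\bigl(a(x)-b(x)g(x)\bigr),
\]
which is still divisible by $f$; hence $(ac,bc)$ satisfies the weak conditions. For the non-vanishing requirement I evaluate at the nodes: $(bc)(x_i)=b(x_i)\,c(x_i)$ is nonzero because $b(x_i)\neq 0$ and $c(x_i)\neq 0$ by the hypothesis on $c$ (which in particular forces $c\not\equiv 0$, so that $bc$ is a genuine denominator). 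By the characterization, $(ac,bc)$ defines an interpolating function.

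For the reverse implication I would assume that $(ac,bc)$ defines an interpolating function, so that $f\mid ac-bcg=c\,(a-bg)$ and $(bc)(x_i)=b(x_i)\,c(x_i)\neq 0$; the latter immediately yields $b(x_i)\neq 0$ for every $i$. The one place requiring a genuine (if short) argument is recovering $f\mid a-bg$ from $f\mid c\,(a-bg)$: since $\gcd(c,f)=1$ by the coprimality observation above, $c$ can be cancelled and the divisibility passes to $a-bg$, so $(a,b)$ satisfies the weak conditions. Invoking the characterization once more shows that $(a,b)$ defines an interpolating function. The argument is otherwise mechanical, and this cancellation step — resting entirely on the coprimality of $c$ and $f$ — is the sole point I expect to spell out, everything else following directly from the stated characterization and the multiplicativity of the map $a-bg\mapsto c\,(a-bg)$.
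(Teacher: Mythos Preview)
Your proof is correct and is precisely the straightforward verification the paper alludes to (the paper offers no argument beyond the phrase ``follows straightforwardly''). One small observation: in the reverse implication the hypothesis $c(x_i)\neq 0$ is in fact automatic from $(bc)(x_i)\neq 0$, so your coprimality step goes through even without assuming it separately, which matches the paper's unqualified phrasing of the reciprocal.
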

Another way of stating Proposition \ref{above} is that there exists an interpolating function of the data in a given class of $\K(x),$ if and only if the irreducible fraction of the class is an interpolating fraction. In particular, if $y(x)$ is an interpolating function, then there exist coprime  polynomials $a(x)$, $b(x)$ such that $y(x)=\frac{a(x)}{b(x)}$ in $\K(x)$.

Consider now two polynomials $r_0(x),\,r_1(x)\in\K[x]$ with $\deg r_0(x)\geq \deg r_1(x)\geq0$. Following the {\em Euclidean Algorithm}, there exists a positive integer $N$ and unique nonzero polynomials $r_2(x),\ldots, r_N(x),\, q_1(x),\ldots, q_N(x)$ with $\deg r_i(x) > \deg r_{i+1}(x)$, $i=1,\dots N$,
such that
\begin{equation}\label{euc} r_i(x)=q_{i+1}(x)r_{i+1}(x) +r_{i+2}(x), \qquad i=0,\dots ,N-1, \end{equation}
where $N$ is such that $r_N(x)\neq0$ and $r_{N+1}(x)=0$.

 Note that $\deg q_i(x)>0$ for all $i>1$ and that $\deg q_i(x)>0$ for all $i\geq 1$ if $\deg r_0(x)> \deg r_1(x)$.

Using the quotients $q_i(x),$  we define recursively two sequences of polynomials $s_i(x)$, $t_i(x)$, for $i=0,\dots N+1$,
  as follows:
\begin{equation}\label{abb}
\begin{split}
& \begin{pmatrix} s_{0}(x)\\t_{0}(x) \end{pmatrix}= \begin{pmatrix} 0\\1 \end{pmatrix}, \quad \begin{pmatrix} s_{1}(x)\\t_{1}(x) \end{pmatrix}= \begin{pmatrix} 1\\0 \end{pmatrix}, \\  &\begin{pmatrix} s_{i}(x)\\t_{i}(x) \end{pmatrix}= \begin{pmatrix} s_{i-2}(x)\\t_{i-2}(x) \end{pmatrix}- q_{i-1}(x) \begin{pmatrix} s_{i-1}(x)\\t_{i-1}(x) \end{pmatrix},
 \end{split}
 \end{equation}
 with $\deg s_i(x)> \deg s_{i-1}(x)$ and $\deg t_i(x)> \deg t_{i-1}(x)$ for $i>2$.
We deduce from \eqref{euc} and \eqref{abb} that
\begin{equation*}
  r_i(x)= r_1(x)s_i(x) +r_0(x)t_i(x), \quad i=0,\dots N+1.
\end{equation*}
\begin{remark}
\label{degrees}
An easy inductive argument on \eqref{euc} and \eqref{abb} yields
\begin{enumerate}
\item [(a)] $\deg r_i(x)= \deg r_0(x)-(\deg q_1(x) + \cdots + \deg q_i(x))$, for $i=1,\dots, N$.
\item [(b)] $\deg s_i(x)=\deg q_1(x)+ \cdots + \deg q_{i-1}(x)$, for $i=2,\dots, N$.
\end{enumerate}
\end{remark}

\begin{lemma}
\label{emes}
\cite[Lemma 2.5]{ant88} Assume that $\deg r_0(x) > \deg r_1(x)$. For every $a(x),b(x),c(x)\in\K[x]$ satisfying  $ a(x)= r_1(x)b(x) +r_0(x)c(x)$, there exist unique polynomials $m_0(x)$, $m_1(x),\dots , m_{N+1}(x)$ such that

\[ \begin{pmatrix} a(x)\\ b(x)\\c(x) \end{pmatrix}= \sum_{i=0}^{N+1}m_i(x) \begin{pmatrix} r_i(x)\\s_{i}(x)\\t_{i}(x) \end{pmatrix}
\]
with $\deg m_i(x)<\deg q_i(x)$, $i=1,\dots ,N$.
\end{lemma}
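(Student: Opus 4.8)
The plan is to peel the three-row identity down to a one-row polynomial division problem and then solve that by the division algorithm. First I would note that the top row is automatically taken care of: from the relation $r_i(x)=r_1(x)s_i(x)+r_0(x)t_i(x)$ one gets, for any choice of the $m_i(x)$, that $\sum_i m_i r_i = r_1\big(\sum_i m_i s_i\big)+r_0\big(\sum_i m_i t_i\big)$, so as soon as the second and third rows of the target match $\sum_i m_i s_i$ and $\sum_i m_i t_i$, the first row $a=r_1b+r_0c$ is forced by the hypothesis. Hence the statement reduces to showing that there are unique $m_0,\dots,m_{N+1}$ with $\deg m_i<\deg q_i$ for $1\le i\le N$ such that $b=\sum_{i=0}^{N+1}m_i s_i$ and $c=\sum_{i=0}^{N+1}m_i t_i$.

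Next I would discard the role of $m_0$. Since $(s_0,t_0)=(0,1)$, the coefficient $m_0$ does not appear in the second-row equation $b=\sum_{i=1}^{N+1}m_i s_i$, while in the third row it appears alone as $c=m_0+\sum_{i=1}^{N+1}m_i t_i$. Thus once $m_1,\dots,m_{N+1}$ are determined, $m_0:=c-\sum_{i=1}^{N+1}m_i t_i$ is the unique (and unconstrained) choice making the third row hold. The whole problem therefore collapses to proving that every $b\in\K[x]$ admits a unique expansion $b=\sum_{i=1}^{N+1}m_i s_i$ with $\deg m_i<\deg q_i$ for $1\le i\le N$ and $m_{N+1}$ arbitrary.

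The key computation is a degree identity. From Remark \ref{degrees} one has $\deg s_i=\deg q_1+\dots+\deg q_{i-1}=\deg r_0-\deg r_{i-1}$ (extending part (b) to $i=N+1$) and $\deg q_i=\deg r_{i-1}-\deg r_i$, so that $\deg s_i+\deg q_i=\deg r_0-\deg r_i$ for $1\le i\le N$; note also $\deg s_1=0$. I would then run a descending sequence of Euclidean divisions: divide $b$ by $s_{N+1}$ (which has degree $\deg r_0-\deg r_N$) to get the unconstrained quotient $m_{N+1}$ and a remainder of degree below $\deg r_0-\deg r_N$; divide that remainder by $s_N$, then by $s_{N-1}$, and so on down to $s_1=1$. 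The degree identity guarantees that at step $i$ the remainder entering the division by $s_i$ has degree strictly below $\deg s_{i+1}=\deg r_0-\deg r_i$, so the resulting quotient $m_i$ satisfies $\deg m_i\le (\deg r_0-\deg r_i-1)-(\deg r_0-\deg r_{i-1})=\deg q_i-1$, exactly the required bound, while the new remainder drops below $\deg s_i=\deg r_0-\deg r_{i-1}$ and feeds the next step; at the bottom the last remainder has degree $<\deg q_1$ and equals $m_1$. Existence and uniqueness of each $m_i$ come from the uniqueness of quotient and remainder in polynomial division.

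The main obstacle is precisely this degree bookkeeping: one must check that the successive remainders tile the degree range without gaps or overlaps, i.e. that the quotient produced by dividing by $s_i$ lands automatically in the prescribed window $\deg m_i<\deg q_i$ and that the remainder shrinks to exactly the top of the next window $\deg s_i$. Everything hinges on the single identity $\deg s_i+\deg q_i=\deg r_0-\deg r_i$, which makes the windows $[\deg s_i,\deg s_{i+1})$ adjacent; once this is in place the argument is a routine finite descent. A secondary point to verify is that part (b) of Remark \ref{degrees} indeed extends to the index $i=N+1$, which follows from the same inductive argument applied to the last step of \eqref{abb}.
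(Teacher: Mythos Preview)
The paper does not supply a proof of this lemma; it simply cites \cite[Lemma~2.5]{ant88}. Your argument is correct and complete. Reducing the three-row identity to the second row via the B\'ezout relation $r_i=r_1s_i+r_0t_i$ and the observation that $(s_0,t_0)=(0,1)$ absorbs $m_0$ is exactly the right move, and the iterated Euclidean division of $b$ against $s_{N+1},s_N,\dots,s_1$ works because of the degree identity $\deg s_{i+1}=\deg s_i+\deg q_i$ (your $\deg s_i+\deg q_i=\deg r_0-\deg r_i$), which you correctly derive from Remark~\ref{degrees}. The hypothesis $\deg r_0>\deg r_1$ is precisely what makes $\deg q_1\ge 1$, so that all $s_i$ with $i\ge 1$ are nonzero and the divisions are well defined; this is the only place the hypothesis is used, and you invoke it implicitly. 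Uniqueness follows, as you say, from the uniqueness of quotient and remainder at each step, or equivalently from a leading-term argument on $\sum n_i s_i=0$.
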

Note that there are no bounds on the degrees of  neither $m_0(x)$ nor $m_{N+1}(x)$ in Lemma \ref{emes}.

\bigskip
\section{Syzygies and the $\delta$-degree}\label{3}
For an interpolating function  $y(x)=\frac{a(x)}{b(x)}$ of  \eqref{data} there exists, by using \eqref{eq}, $c(x)\in\K[x]$ such that
$a(x)=g(x)b(x)+c(x)f(x)$. So, $(a(x),b(x),c(x))\in \K[x]^3$ is in the kernel of the following morphism of $\K[x]$-modules:
\begin{equation}\label{avv}
\varphi:\K[x]^3 \xrightarrow{{\tiny (1\, -g(x) \, -f(x))}} \K[x].
\end{equation}
The following result is a straightforward consequence of the application of the Euclidean Algorithm to the solution of an equation of the form
\begin{equation*}
a(x)=g(x)b(x)+c(x)f(x).
\end{equation*}
\begin{proposition}\label{free}
The kernel of $\varphi$ is a free module of rank
$2,$ and it has \linebreak $\{(f(x),0,1),\,(g(x),1,0)\}$ as a basis.
\end{proposition}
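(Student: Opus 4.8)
The plan is to identify $\ker\varphi$ explicitly as the image of an injective $\K[x]$-linear map out of $\K[x]^2$, with the two proposed vectors being the images of the standard basis vectors. First I would check membership: applying the row $(1,\,-g(x),\,-f(x))$ to the column $(f(x),0,1)^{t}$ produces $f(x)-f(x)=0$, and to $(g(x),1,0)^{t}$ produces $g(x)-g(x)=0$. Hence both candidate vectors lie in $\ker\varphi$, and so does every $\K[x]$-combination of them.

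Next I would show these two vectors span the kernel. If $(a(x),b(x),c(x))\in\ker\varphi$, then by definition $a(x)=g(x)b(x)+f(x)c(x)$, which is exactly the weak interpolation condition \eqref{eq} rewritten. The key observation is that this equation recovers the first coordinate $a(x)$ from the last two coordinates $b(x),c(x)$; concretely,
\[
b(x)\,(g(x),1,0)+c(x)\,(f(x),0,1)=\bigl(g(x)b(x)+f(x)c(x),\,b(x),\,c(x)\bigr)=(a(x),b(x),c(x)).
\]
Thus the given pair generates $\ker\varphi$ over $\K[x]$.

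Finally I would establish linear independence, which simultaneously yields freeness and the rank. Suppose $\alpha(x)\,(g(x),1,0)+\beta(x)\,(f(x),0,1)=(0,0,0)$; reading off the second coordinate forces $\alpha(x)=0$ and the third forces $\beta(x)=0$. Equivalently, the assignment $(\beta(x),\alpha(x))\mapsto \beta(x)(g(x),1,0)+\alpha(x)(f(x),0,1)$ is a $\K[x]$-linear map $\K[x]^2\to\ker\varphi$ whose inverse is projection onto the last two coordinates, hence an isomorphism. Therefore $\ker\varphi\cong\K[x]^2$ is free of rank $2$ with the stated basis.

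There is no serious obstacle here: everything reduces to the remark that the last two coordinates of the two generators form the $2\times 2$ identity pattern, so spanning and independence are both immediate once the weak interpolation equation $a=gb+cf$ is used to recover $a$ from $(b,c)$. The only point meriting a moment's care is this recovery step in the spanning argument; no appeal to the Euclidean Algorithm is strictly necessary, although Lemma~\ref{emes} furnishes the same conclusion as an alternative route.
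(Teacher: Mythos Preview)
Your argument is correct. The paper's own proof is a one-line remark that the proposition ``is a straightforward consequence of the application of the Euclidean Algorithm to the solution of an equation of the form $a(x)=g(x)b(x)+c(x)f(x)$,'' whereas you give a fully elementary direct verification: membership, spanning via the identity $(a,b,c)=b\,(g,1,0)+c\,(f,0,1)$ once $a=gb+fc$, and independence by reading off the last two coordinates. This is in fact simpler than what the paper suggests, and you are right to observe that no appeal to the Euclidean Algorithm (or to Lemma~\ref{emes}) is needed; the $2\times 2$ identity block in the last two coordinates does all the work.
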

Note that, in particular we have
\begin{equation*}
(a(x),b(x))= \frac{a(x)-b(x)g(x)}{f(x)}(f(x),0) +b(x)(g(x),1),
\end{equation*}
and that  from here we deduce  straightforwardly that $(f(x),0)$ and $(g(x),1)$ is also a basis of  the $\K[x]$-submodule of $\K[x]^2$ defined by
\begin{equation*}
 Y:= \{(a(x), b(x))\in \mathbb{K}[x]^{2}\mbox{ such that } a(x)-b(x)g(x) \in  f(x)\K[x] \}.
\end{equation*}
From its definition, it is clear that that  $Y$ is the set of the pairs of polynomials satisfying the weak interpolating conditions \eqref{weak} or \eqref{eq}.

Homogenizing the above situation with a second variable $z,$ we obtain a homogeneous morphism
$\phi: \K[x,z](-n)^3 \longrightarrow \K[x,z]$ given by the matrix
\[\begin{pmatrix} z^n&-g(x,z) &-f(x,z) \end{pmatrix}, \]
and, by the {\em Hilbert Syzygy Theorem}, a minimal free resolution of $\K[x,z]/\mbox{Coker}(\phi)$	
\begin{equation}\label{HST}
0 \longrightarrow \K[x,z](-n-\mu_1)\oplus \K[x,z](-n-\mu_2)\longrightarrow \K[x,z](-n)^3 \stackrel{\phi}{\longrightarrow }\K[x,z]
\end{equation}	
with $\mu_1+\mu_2=n,$ and  $\mu_1\leq \mu_2$. Our goal is to make explicit $\mu_1$ and $\mu_2$ by means of syzygies and the Euclidean Algorithm.

Let   $\{(a_1(x,z),b_1(x,z),c_1(x,z)),\,(a_2(x,z),b_2(x,z),c_2(x,z))\}$ be a basis of the kernel of $\phi$ such that
$\deg(a_i(x,z),b_i(x,z),c_i(x,z))=\mu_i,\,i=1,2.$ Any relation or syzygy among $z^n$, $-g(x,z)$ and $-f(x,z)$ can be written uniquely  as a polynomial combination of these two triplets. In particular, for any homogeneous syzygy $(a(x,z),b(x,z),c(x,z))$ of degree $\delta$ there exist unique homogeneous polynomials $p(x,z)$, $q(x,z)$, of degrees $\delta-\mu_1$ and $\delta-\mu_2$ respectively, or zero polynomials, such that {\small
\begin{multline}
\label{homog}
(a(x,z),b(x,z),c(x,z)) = p(x,z)\cdot (a_1(x,z),b_1(x,z),c_1(x,z))\\ +q(x,z)\cdot (a_2(x,z),b_2(x,z),c_2(x,z)).
\end{multline}

Set now  $a_i(x)=a_i(x,1)$, $b_i(x)=b_i(x,1)$, $c_i(x)=c_i(x,1),\, i=1, 2.$
\begin{lemma}\label{lema}
 For any $(a(x),b(x),c(x))$ in the kernel of $\varphi$, we have that $$\delta:=\max \{\deg a(x), \deg b(x), \deg c(x)\}=\max \{\deg a(x), \deg b(x)\},$$ and there
 exist unique polynomials $p(x)$, $q(x)$ such that
\begin{equation}\label{dehom}
(a(x),b(x)) = p(x)(a_1(x),b_1(x))+q(x)(a_2(x),b_2(x)).
\end{equation} Moreover, $\deg p(x)\leq \delta -\mu_1$ and $ \deg q(x) \leq \delta-\mu_2$.
\end{lemma}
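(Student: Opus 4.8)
The plan is to establish the three assertions separately: first that $\delta=\max\{\deg a(x),\deg b(x)\}$, then the existence of the decomposition \eqref{dehom} with the stated degree bounds, and finally uniqueness.

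For the first assertion I would argue by a direct degree count. Being in the kernel of $\varphi$, the triple satisfies relation \eqref{eq}, so $c(x)f(x)=a(x)-g(x)b(x)$. Since $f(x)$ is monic of degree $n$ and $\deg g(x)<n$, one has $\deg\bigl(c(x)f(x)\bigr)=\deg c(x)+n$, while $\deg\bigl(a(x)-g(x)b(x)\bigr)\le\max\{\deg a(x),\,(n-1)+\deg b(x)\}$. Writing $m=\max\{\deg a(x),\deg b(x)\}$ and comparing these two degrees (treating the degenerate cases $b(x)=0$ or $c(x)=0$ on their own) gives $\deg c(x)\le m-1<m$, whence $\delta=\max\{\deg a(x),\deg b(x),\deg c(x)\}=m$.

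For existence, the idea is to lift $(a(x),b(x),c(x))$ to a genuine homogeneous syzygy of $\phi$ and then apply \eqref{homog}. I would homogenize all three components to the common degree $\delta$, setting $A(x,z)=z^\delta a(x/z)$, $B(x,z)=z^\delta b(x/z)$, $C(x,z)=z^\delta c(x/z)$; this is legitimate precisely because $\deg a(x),\deg b(x),\deg c(x)$ are all at most $\delta$. To see that $(A,B,C)$ is a syzygy, consider $H:=z^nA-g(x,z)B-f(x,z)C$, which is homogeneous of degree $n+\delta$; since $H(x,1)=a(x)-g(x)b(x)-c(x)f(x)=0$ by \eqref{eq} and a nonzero homogeneous polynomial cannot vanish identically after setting $z=1$, we conclude $H=0$. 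Now \eqref{homog} furnishes unique homogeneous $P(x,z),Q(x,z)$ of degrees $\delta-\mu_1$ and $\delta-\mu_2$ (or zero) with $(A,B,C)=P\,(a_1,b_1,c_1)+Q\,(a_2,b_2,c_2)$. Setting $z=1$ yields the full three-coordinate identity, whose first two coordinates are \eqref{dehom}, with $p(x)=P(x,1)$ and $q(x)=Q(x,1)$; since dehomogenization does not raise degree, $\deg p(x)\le\delta-\mu_1$ and $\deg q(x)\le\delta-\mu_2$.

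For uniqueness I would first record that the dehomogenized triples $(a_i(x),b_i(x),c_i(x))$, $i=1,2$, are $\K[x]$-linearly independent: the homogeneous triples are linearly independent over $\K[x,z]$ (being a basis of $\ker\phi$), so some $2\times 2$ minor of the $3\times 2$ matrix whose columns they are is a nonzero homogeneous polynomial (of degree $\mu_1+\mu_2=n$), and such a polynomial stays nonzero when $z=1$. Suppose now that both $(p,q)$ and $(p',q')$ satisfy \eqref{dehom}. Each combination $p\,(a_1,b_1,c_1)+q\,(a_2,b_2,c_2)$ lies in $\ker\varphi$ and has first two coordinates $(a(x),b(x))$; since membership in $\ker\varphi$ forces $c(x)f(x)=a(x)-g(x)b(x)$ by \eqref{eq}, the third coordinate is determined by the first two, so both combinations coincide with the full triple $(a(x),b(x),c(x))$. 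Subtracting gives $(p-p')(a_1,b_1,c_1)+(q-q')(a_2,b_2,c_2)=0$, and the linear independence just noted forces $p=p'$ and $q=q'$.

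I expect the main obstacle to lie in the existence step, namely in the homogenization bookkeeping: one must homogenize to a single common degree so that $(A,B,C)$ is a bona fide degree-$\delta$ syzygy, and then check that the degrees of the cofactors produced by \eqref{homog} transfer correctly under $z=1$. The linear independence underlying uniqueness is the other delicate point, but the remark that nonzero homogeneous polynomials survive the specialization $z=1$ reduces it to the rank computation above.
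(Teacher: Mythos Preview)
Your proof is correct and follows essentially the same route as the paper: a degree comparison using \eqref{eq} for the first claim, then homogenization to degree $\delta$ followed by an appeal to \eqref{homog} and the specialization $z=1$ for existence and the degree bounds. You supply more detail than the paper does---in particular the explicit verification that the homogenized triple is a syzygy and the linear-independence argument for uniqueness---but these simply fill in what the paper's proof leaves as ``straightforward.''
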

\begin{proof}
Let $k=\deg g(x).$ Note that $k\leq n-1<n.$

Suppose that $\deg c(x)>\max\{\deg a(x),\,\deg b(x)\}.$ Then we would have that
$$\deg\left(c(x)f(x)\right)=\deg c(x)+n>\max\{\deg a(x), \deg b(x) +k\}\geq \deg\left(a(x)-b(x)g(x)\right),
$$
a contradiction with \eqref{eq} so, the first part of the claim holds. For the second, we homogeneize $a(x),\,b(x),\,c(x)$ to degree $\delta$ and get \eqref{homog} with  $p(x,z),\,q(x,z)\in\K[x,z]$ of respective degrees $\delta-\mu_1$ and $\delta -\mu_2.$ To get \eqref{dehom} we set $z=1,$ and the claim follows straightforwardly.
\end{proof}

\begin{definition}
We say that a basis $\{(a_1(x),b_1(x)),\, (a_2(x),b_2(x))\}$ of $Y$ is {\em minimal}  (or of minimal degree) if $\max\{\deg a_i(x),\deg b_i(x)\}=\mu_i,\ i=1,2.$
\end{definition}
Note that  in general, $\{(f(x),0),\,(g(x),1)\}$ is not a minimal basis.

\begin{remark}
\label{unique}
Let $\{(a_1(x),b_1(x))$, $(a_2(x),b_2(x))\}$ a minimal basis of $Y$.  From its definition, we deduce that for any $(a(x),b(x))\in Y,$ there exist unique $p(x)$, $q(x)\in\K[x]$ such that
\begin{equation*}
\begin{split}
(a(x),b(x)) & = p(x)(a_1(x),b_1(x))+q(x)(a_2(x),b_2(x)) \\ & =  (p(x)a_1(x)+q(x)a_2(x), p(x)b_1(x)+q(x)b_2(x)).
\end{split}
\end{equation*}
Let $c(x)\in\K[x]$ be the polynomial such that \eqref{eq} holds, and set $\delta:=\max\{\deg a(x),\,\deg b(x)\}.$ Homogeneizing this situation, thanks to Lemma \ref{lema} we have
\begin{equation*} \delta  =\max\{ \deg(p(x,z)a_1(x,z)+q(x,z)a_2(x,z)), \deg(p(x,z)b_1(x,z)+q(x,z)b_2(x,z))\}, \end{equation*}
and from here we deduce that  $ \deg p(x)\leq \delta-\mu_1$ and $ \deg q(x)\leq \delta-\mu_2$. In particular,
 $\delta(\frac{a(x)}{b(x)}) \geq \mu_1,$ and if $q(x)\neq 0$, then  $\delta(\frac{a(x)}{b(x)}) \geq \mu_2$.

\end{remark}

\begin{remark}\label{any}
Any basis of $Y$ with $\delta$-degrees $\delta_1$, $\delta_2$ with $\delta_1 +\delta_2= n$ is a minimal basis of $Y$. In this case $\{\delta_1,\delta_2\}=\{\mu_1,\mu_2\}$.
\end{remark}

\begin{remark}
\label{factorsbasis}
Let $\{(a_1(x),b_1(x)),\,(a_2(x),b_2(x))\}$ be a minimal basis of $Y$. From Proposition \ref{above} we deduce that  the unique possible irreducible common factors of $a_i(x)$ and $b_i(x),$  are of the form $x-x_j$ for some $j=1,\ldots, l;\, i=1,2.$ Otherwise,  by removing such a factor, we would obtain a pair verifying the weak interpolating conditions with degree strictly smaller than $\mu_1$ or than $\mu_2$.

Also, since there exist interpolating functions for any data (for instance, the interpolating polynomial $g(x)$),   $b_1(x)$ and $b_2(x)$ cannot both vanish at  $x_i$ for any $i=1,\ldots, l.$
\end{remark}

The following is the main result of this section, which generalizes  \cite[Theorem 2.11]{ABKW90} to the case of interpolation with multiplicities.

\begin{theorem}\label{nininal}
Let  $\{(a_1(x),b_1(x)),\,(a_2(x),b_2(x))\}$ be a minimal basis of $Y$. The rational function $y(x)$ is an interpolating function if and only if there exist polynomials $p(x)$ and $q(x)$ such that
\[y(x)=\frac{p(x)a_1(x)+q(x)a_2(x)}{p(x)b_1(x)+q(x)b_2(x)} \]
with $p(x_i)b_1(x_i)+q(x_i)b_2(x_i)\neq 0$ for $i=1,\dots ,l$.

If $a_1(x)$ and $b_1(x)$ are coprime,  and $\mu_1<\mu_2$, then there is a unique interpolating function $y_{\min }(x)$ of minimal degree $\mu_1$ given by
\[  y_{\min }(x)=\frac{a_1(x)}{b_1(x)}.\]
Otherwise,
there is a family of interpolating functions of minimal degree $\mu_2$ which can be parametrized as
\[  y_{\min }(x)=\frac{a_2(x)+p(x)a_1(x)}{b_2(x)+p(x)b_1(x)},\]
where $\deg p(x)=\mu_2-\mu_1$, and $b_2(x_i)+p(x_i)b_1(x_i) \neq 0$, $i=1,\dots ,l$.
\end{theorem}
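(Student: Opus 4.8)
The plan is to separate the statement into its three assertions—the parametrization of all interpolating functions, the uniqueness in the coprime case, and the family in the remaining case—and to run all the degree bookkeeping through Remark \ref{unique}. For the first assertion I would use the criterion recorded after \eqref{eq}: $y(x)=\frac{a(x)}{b(x)}$ is an interpolating function if and only if $(a(x),b(x))$ lies in $Y$ and $b(x_i)\neq 0$ for $i=1,\dots,l$. Since $\{(a_1,b_1),(a_2,b_2)\}$ is a basis of $Y$, every element of $Y$ is uniquely $p(a_1,b_1)+q(a_2,b_2)$ with $p,q\in\K[x]$, and conversely every such combination lies in $Y$ because $Y$ is a $\K[x]$-module. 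Hence $y(x)$ is interpolating exactly when it can be written as $\frac{pa_1+qa_2}{pb_1+qb_2}$ with $(pb_1+qb_2)(x_i)\neq 0$, which is the first claim.

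For the coprime case, the lower bound $\delta\bigl(\frac{a}{b}\bigr)\geq\mu_1$ for every interpolating function is already in Remark \ref{unique}. Assume $a_1,b_1$ coprime and $\mu_1<\mu_2$. Evaluating the weak condition \eqref{weak} at $x_i$ with $j=0$ gives $a_1(x_i)=b_1(x_i)g(x_i)$; if $b_1(x_i)=0$ we would get $a_1(x_i)=0$, contradicting coprimality, so $b_1(x_i)\neq 0$ and $\frac{a_1}{b_1}$ is an interpolating function of degree $\mu_1$. For uniqueness, if $\frac{a}{b}$ is interpolating with coprime $(a,b)$ and $\delta\bigl(\frac{a}{b}\bigr)=\mu_1$, then writing $(a,b)=p(a_1,b_1)+q(a_2,b_2)$ the bounds of Remark \ref{unique} force $\deg q\leq\mu_1-\mu_2<0$ and $\deg p\leq 0$; thus $q=0$, $p$ is constant, and $\frac{a}{b}=\frac{a_1}{b_1}$.

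In the remaining case I would first show the minimal degree equals $\mu_2$. If some interpolating function had coprime representative of degree $\delta<\mu_2$, then $\deg q\leq\delta-\mu_2<0$ forces $q=0$, so $(a,b)=p(a_1,b_1)$; coprimality of $(a,b)$ then makes $p$ a constant and $a_1,b_1$ coprime with $\mu_1=\delta<\mu_2$, placing us in the previous case—a contradiction. Hence no interpolating function has degree below $\mu_2$. Then I would check that the proposed family realizes $\mu_2$: for $\deg p=\mu_2-\mu_1$ the pair $(a_2+pa_1,b_2+pb_1)=p(a_1,b_1)+(a_2,b_2)$ lies in $Y$ and has degree at most $\mu_2$; by Remark \ref{factorsbasis} the values $b_1(x_i),b_2(x_i)$ never vanish simultaneously, so since $\K$ is infinite one can choose $p$ with $(b_2+pb_1)(x_i)\neq 0$ for all $i$. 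The resulting $\frac{a_2+pa_1}{b_2+pb_1}$ is then interpolating, and because the minimal degree is $\mu_2$ its reduced degree must be exactly $\mu_2$, forcing the pair to be coprime.

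The main obstacle is the degree accounting in this last case: one must argue that reducing $(a_2+pa_1,b_2+pb_1)$ to lowest terms cannot lower the degree below $\mu_2$—which is precisely where the already-established bound $\mu_2$ is used—and that the non-vanishing conditions at the $x_i$ are simultaneously satisfiable. I would also flag the edge phenomenon $\mu_1=\mu_2$: there the hypothesis ``$a_1,b_1$ coprime and $\mu_1<\mu_2$'' fails, so one lands in the family case with $\deg p=0$, and if moreover $a_1,b_1$ happen to be coprime then $\frac{a_1}{b_1}$ is a further minimal solution not visible in the displayed parametrization; accordingly the statement is to be read as exhibiting a family of minimal solutions rather than necessarily all of them.
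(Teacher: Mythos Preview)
Your proof is correct and follows essentially the same route as the paper: the first assertion via the criterion after \eqref{eq} together with Remark~\ref{unique}, the coprime case by evaluating the weak condition to show $b_1(x_i)\neq 0$ and then invoking the degree bounds for uniqueness, and the remaining case by forcing $q=0$ when $\delta<\mu_2$ and checking existence in the family via Remark~\ref{factorsbasis} and infiniteness of $\K$. Your closing remark that for $\mu_1=\mu_2$ with $a_1,b_1$ coprime the fraction $a_1/b_1$ is a further minimal solution not captured by the displayed family is a valid observation; the paper's own proof treats $\mu_1=\mu_2$ separately but its parametrization has the same limitation.
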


\begin{proof}
The rational function $y(x)=\frac{a(x)}{b(x)}$ interpolates \eqref{data} if and only if $b(x_i)\neq0$ for all $i=1,\ldots, l,$ and it satisfies the weak interpolation conditions of \eqref{eq}. Equivalently we must have $(a(x),b(x))\in Y$,  and $b(x_i)\neq 0$ for $i=1, \dots, l$. From here we deduce the first part of the claim thanks to Remark \ref{unique}.

Assume now that $\mu_1< \mu_2$ and $a_1(x)$ and $b_1(x)$ are coprime. If $b_1(x_i)=0$ for some $i$, then also $a_1(x_i)=0$, since $a_1(x)-b_1(x)g(x) \in f(x)\K[x]$ which contradicts the assumption. The rational function $y(x)=\frac{a_1(x)}{b_1(x)}$ interpolates the data in this case. Moreover, by Remark \ref{unique}, it is the unique rational function with minimum degree that interpolates the data.

If $\mu_1< \mu_2$ and $a_1(x)$ and $b_1(x)$ are not coprime then, by using Remark \ref{factorsbasis}, $b_1(x_i)=0$ for some $i=1,\dots ,l$ and $\frac{a_1(x)}{b_1(x)}$ doesn't interpolate the data. In this case, any pair
$(p(x)a_1(x)+q(x)a_2(x), p(x)b_1(x)+q(x)b_2(x))$ defining an interpolating function must have $q(x)\neq 0$ and degree at least $\mu_2$. So, the interpolating functions of minimal degree are parameterized by $\frac{a_2(x)+p(x)a_1(x)}{b_2(x)+p(x)b_1(x)}$, where $p(x)$ is a  polynomial of degree $\mu_2-\mu_1$ provided that $b_2(x_i)+p(x_i)b_1(x_i) \neq 0$, $i=1,\dots ,l$.
It is straightforward to verify that there is at least one of such polynomials. Indeed, by choosing $\lambda\in\K\setminus\{-\frac{b_2(x_i)}{b_1(x_i)},\,i=1,\ldots, l,\,b_1(x_i)\neq0\}$  which can be done thanks to Remark \ref{factorsbasis}, and the fact that $\K$ has infinite elements, the polynomial $b_2(x)+\lambda b_1(x)$ satisfies the claim. Note that for these values of $\lambda$ it cannot happen that $\delta\left(\frac{a_2(x)+\lambda a_1(x)}{b_2(x)+\lambda b_1(x)}\right)<\mu_2,$ as this would imply that the pair
$(a_2(x)+\lambda a_1(x), b_2(x)+\lambda b_1(x))$ is a polynomial multiple of $(a_1(x), b_1(x))$ which is impossible as $b_1(x)$ vanishes in some of the $x_i's$ and $\lambda$ has been chosen in such a way that $b_2(x)+\lambda b_1(x)$ does not vanish in any of these points.

If $\mu_1=\mu_2$  the family of interpolating rational functions of minimal degree can be written as
$\frac{a_2(x)+\lambda\,a_1(x)}{b_2(x)+\lambda\,b_1(x)}$, with $\lambda\in\K\setminus\{-\frac{b_2(x_i)}{b_1(x_i)},\,i=1,\ldots, l,\,b_1(x_i)\neq0\},$ as before. This concludes with the proof of the Theorem.
\end{proof}
\begin{corollary}
The minimal $\delta$-admissible degree is either $\mu_1$ or $\mu_2.$ The set of $\delta$-admissible degrees are either $\{\mu_1\}\cup\{\delta \geq\mu_2\},$ or $\{\delta \geq\mu_2\}.$
\end{corollary}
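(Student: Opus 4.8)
The plan is to read off the minimal admissible degree directly from Theorem \ref{nininal} and then to pin down the whole admissible set by combining the lower bounds of Remark \ref{unique} with an explicit construction. Fix a minimal basis $\{(a_1(x),b_1(x)),(a_2(x),b_2(x))\}$ of $Y$ with $\mu_1\le\mu_2$. For the first assertion there is nothing new to do: Theorem \ref{nininal} already says that if $a_1,b_1$ are coprime and $\mu_1<\mu_2$ the minimal degree is $\mu_1$, while otherwise it is $\mu_2$; hence the minimal admissible degree is always one of $\mu_1,\mu_2$.

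Next I would show that no admissible degree lies strictly between $\mu_1$ and $\mu_2$, and that $\mu_1$ occurs precisely in the coprime case. Take any interpolating $y(x)=a(x)/b(x)$ with $a,b$ coprime, and write $(a,b)=p(a_1,b_1)+q(a_2,b_2)$ as in Remark \ref{unique}. If $q\neq0$, Remark \ref{unique} gives $\delta(y)\ge\mu_2$. If $q=0$, then $(a,b)=p\,(a_1,b_1)$, and coprimality of $a,b$ forces both $p$ to be constant and $a_1,b_1$ to be coprime, so that $\delta(y)=\mu_1$ and we are in the case where $a_1,b_1$ are coprime. Consequently every admissible degree lies in $\{\mu_1\}\cup\{\delta\ge\mu_2\}$, with $\mu_1$ admissible if and only if $a_1,b_1$ are coprime.

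It remains to prove that every $\delta\ge\mu_2$ is admissible, and this I expect to be the only delicate point, since one must exhibit an interpolating function whose \emph{coprime} degree is exactly $\delta$. For $\delta=\mu_2$ this is Theorem \ref{nininal} (using that a unimodular change of basis keeps the basis minimal, so Remark \ref{factorsbasis} still applies). For $\delta>\mu_2$ I would search among the pairs $(a_2+p\,a_1,\ b_2+p\,b_1)$ with $\deg p=\delta-\mu_1$: since $\max\{\deg a_1,\deg b_1\}=\mu_1$ while $\deg a_2,\deg b_2\le\mu_2<\delta$, the term $p\,(a_1,b_1)$ dominates and the pair has $\delta$-degree exactly $\delta$. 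Two conditions must then be secured by a suitable choice of $p$: that $b_2(x_i)+p(x_i)b_1(x_i)\neq0$ for all $i$ (the interpolation requirement), and that the pair be coprime (so that reducing to lowest terms does not drop the degree below $\delta$). The first is arranged exactly as in Theorem \ref{nininal} via Remark \ref{factorsbasis}. For the second, a common root $\beta$ of the two entries forces, whenever $(a_1(\beta),b_1(\beta))\neq(0,0)$, the vanishing at $\beta$ of the ``Wronskian'' $W:=a_1b_2-a_2b_1$ together with $p(\beta)=-a_2(\beta)/a_1(\beta)$; since $W\neq0$ (the kernel is free of rank two, so $(a_1,b_1),(a_2,b_2)$ are $\K[x]$-independent) it has only finitely many roots, and the remaining possibility $(a_1(\beta),b_1(\beta))=(0,0)$ is excluded because $b_1,b_2$ do not both vanish at any $x_j$ (Remark \ref{factorsbasis}). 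Thus coprimality, nonvanishing of the denominator at the $x_i$, and $\deg p=\delta-\mu_1$ together amount to avoiding finitely many proper linear conditions on the coefficients of $p$, which is possible because $\K$ is infinite.

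Putting these together yields the statement: the admissible set equals $\{\mu_1\}\cup\{\delta\ge\mu_2\}$ when $a_1,b_1$ are coprime and $\mu_1<\mu_2$, and $\{\delta\ge\mu_2\}$ otherwise, the two descriptions coinciding when $\mu_1=\mu_2$; correspondingly the minimal admissible degree is $\mu_1$ or $\mu_2$. Apart from the coprimality step, everything reduces to degree bookkeeping and the infinitude of $\K$.
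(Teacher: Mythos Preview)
Your argument is correct and follows the same three-step skeleton as the paper: (i) the minimum is $\mu_1$ or $\mu_2$ via Theorem~\ref{nininal}; (ii) nothing strictly between $\mu_1$ and $\mu_2$ is admissible; (iii) every $\delta\ge\mu_2$ is admissible by an explicit construction combined with the infinitude of $\K$.

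The one genuine difference is in the construction for (iii). The paper fixes a coprime minimal solution $(\tilde a,\tilde b)$ and perturbs it by $\lambda\,x^{\delta-\mu_2}(a_2,b_2)$; coprimality is then obtained in one stroke by observing that the resultant of the two entries, viewed as a polynomial in the single scalar $\lambda$, specializes at $\lambda=0$ to $\mathrm{Res}(\tilde a,\tilde b)\neq0$, hence is not identically zero. You instead perturb $(a_2,b_2)$ by $p\,(a_1,b_1)$ with a full polynomial $p$ of degree $\delta-\mu_1$, and argue coprimality by localizing at a hypothetical common root and invoking the nonvanishing of the ``Wronskian'' $a_1b_2-a_2b_1$ together with Remark~\ref{factorsbasis}. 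Your route is a bit more hands-on but has the virtue of checking the interpolation condition $b(x_i)\neq0$ explicitly (the paper leaves this implicit). Your treatment of $\delta=\mu_2$ via the unimodular-change observation---that $\{(a_1,b_1),(a_2+p\,a_1,b_2+p\,b_1)\}$ is again a \emph{minimal} basis (any basis of $Y$ has degree sum $\ge n$ since $a_1b_2-a_2b_1$ is a nonzero constant times $f$), so Remark~\ref{factorsbasis} forces coprimality once the denominator avoids the $x_i$---is a nice touch not present in the paper, which simply absorbs $\delta=\mu_2$ into the general construction.
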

\begin{proof}
By Theorem \ref{nininal}, the minimal admissible $\delta$-degree of $y(x)$ is either $\mu_1$ or $\mu_2$. Moreover, any $y(x)=\frac{a(x)}{b(x)}$ interpolating function with $\delta$-degree $\geq\mu_2$ can be written as
\begin{equation}\label{frac}
 \frac{a(x)}{b(x)}= \frac{p(x)a_1(x)+ q(x)a_2(x)}{p(x)b_1(x)+ q(x)b_2(x)},
 \end{equation}
for some $p(x),\,q(x)\in\K[x],\, q(x)\neq 0$.

We have already seen in Theorem \ref{nininal} that $\mu_1$ may be admissible, and also that $\mu_2$ is admissible if $\mu_1$ is not.
Moreover, as in the proof of this Theorem, it is straightforward to check that there cannot be any rational function as in \eqref{frac} of $\delta$-degree strictly larger than $\mu_1$ and smaller than $\mu_2$, as this would imply
$(p(x)a_1(x)+ q(x)a_2(x), p(x)b_1(x)+ q(x)b_2(x))$ being a polynomial multiple of $(a_1(x), b_1(x))$, and hence it would have $\delta$-degree equal to $\mu_1,$ a contradiction.

To see that any $\delta\geq\mu_2$ is admissible, pick
$\frac{\tilde{a}(x)}{\tilde{b}(x)}$ a minimal solution of $\delta$-degree either $\mu_1$ or $\mu_2$. Note that this implies $\gcd(\tilde{a}(x),\tilde{b}(x))=1.$ Set now
$$(a(x),b(x)):=\left(\tilde{a}(x)+\lambda\,x^{\delta-\mu_2}a_2(x), \tilde{b}(x)+\lambda\,x^{\delta-\mu_2}b_2(x)\right),$$
for a suitable $\lambda\in\K\setminus\{0\}$ such that
\begin{equation}\label{gcd}
\gcd\left(\tilde{a}(x)+\lambda\,x^{\delta-\mu_2}a_2(x), \tilde{b}(x)+\lambda\,x^{\delta-\mu_2}b_2(x)\right)=1.
\end{equation}
This can be done because \eqref{gcd} is equivalent to the fact that the resultant of the polynomials $\tilde{a}(x)+\lambda\,x^{\delta-\mu_2}a_2(x)$ and  $\tilde{b}(x)+\lambda\,x^{\delta-\mu_2}b_2(x)$ does not vanish identically. This resultant is a polynomial in $\lambda$ whose constant coefficient is equal to $\mbox{Res}(\tilde{a}(x),\tilde{b}(x))\neq0$ as these polynomials do not share any common factor. So, $$\mbox{Res}\left(\tilde{a}(x)+\lambda\,x^{\delta-\mu_2}a_2(x), \tilde{b}(x)+\lambda\,x^{\delta-\mu_2}b_2(x)\right)\in\K[\lambda]$$ is not the zero polynomial, and by choosing $\lambda\in\K$ which is not a zero of this polynomials (this can be done because $\K$ is infinite), the claim follows for $\delta\geq\mu_2.$
\end{proof}

In what follows, we are going to make explicit the $\mu_i$'s by means of the Extended Euclidean Algorithm.

\subsection{Minimal basis and the Euclidean algorithm}\label{ss}
Following the notations in Section \ref{EA} for $r_0(x)=f(x)$ and $r_1(x)=g(x)$ we can proceed as in \cite{ABKW90}. In this paper, the authors deal with the rational interpolation problem without multiplicities, to obtain a minimal basis from the Euclidean algorithm. We show here that the same approach works for the general case.

The vector relations
\begin{equation}\label{rec} \begin{pmatrix} r_{i+2}(x)\\ s_{i+2}(x)\\t_{i+2}(x) \end{pmatrix}= \begin{pmatrix} r_i(x)\\s_{i}(x)\\t_{i}(x) \end{pmatrix}- q_{i+1}(x) \begin{pmatrix} r_{i+1}(x)\\s_{i+1}(x)\\t_{i+i}(x) \end{pmatrix}
\end{equation}
with the initial conditions\begin{equation}\label{init}\begin{pmatrix} r_0(x)\\ s_{0}(x)\\t_{0}(x) \end{pmatrix}= \begin{pmatrix} f(x)\\0\\1 \end{pmatrix}, \quad \begin{pmatrix} r_1(x)\\s_{1}(x)\\t_{1}(x) \end{pmatrix}= \begin{pmatrix} g(x)\\ 1\\0 \end{pmatrix}
 \end{equation}
produce a sequence of elements in the kernel of $\begin{pmatrix} 1&-g(x) &-f(x) \end{pmatrix}.$

\begin{proposition}\label{basis}
For $0\leq i\leq N-1,$ the set $$\{(r_i(x),s_i(x),t_i(x)),\, (r_{i+1}(x),s_{i+1}(x),t_{i+1}(x))\}\subset  \K[x]^3$$ is a basis of the kernel of $\varphi$. Also, $\{(r_{i}(x), s_{i}(x)),\,(r_{i+1}(x), s_{i+1}(x))\}$ is a basis of $Y.$
\end{proposition}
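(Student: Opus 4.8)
The plan is to prove the statement by induction on $i$, exploiting that the three-term recurrence \eqref{rec} is governed, at each step, by a matrix whose determinant is a unit in $\K[x]$. First I would record that every triple produced by \eqref{rec}--\eqref{init} lies in $\ker\varphi$: indeed, from the identity $r_i(x)=r_1(x)s_i(x)+r_0(x)t_i(x)=g(x)s_i(x)+f(x)t_i(x)$ noted after \eqref{abb}, we get $\varphi(r_i,s_i,t_i)=r_i-g\,s_i-f\,t_i=0$ for all $i$. Writing $v_i:=(r_i(x),s_i(x),t_i(x))$, the base case $i=0$ asserts that $\{v_0,v_1\}=\{(f,0,1),(g,1,0)\}$ is a basis of $\ker\varphi$, which is exactly Proposition \ref{free}.

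For the inductive step I would rewrite \eqref{rec} as the linear change
\begin{equation*}
\begin{pmatrix} v_{i+1}\\ v_{i+2}\end{pmatrix}=\begin{pmatrix} 0 & 1\\ 1 & -q_{i+1}(x)\end{pmatrix}\begin{pmatrix} v_{i}\\ v_{i+1}\end{pmatrix},
\end{equation*}
where the entries act by scalar multiplication on the module elements $v_i\in\K[x]^3$. The transition matrix has determinant $-1$, a unit in $\K[x]$, hence it lies in $\mathrm{GL}_2(\K[x])$ and its inverse again has polynomial entries. Therefore $\{v_i,v_{i+1}\}$ and $\{v_{i+1},v_{i+2}\}$ generate the same submodule of $\ker\varphi$, and one is a basis precisely when the other is. Combined with the base case, this yields by induction that $\{v_i,v_{i+1}\}$ is a basis of $\ker\varphi$ for every $0\le i\le N-1$ (all the needed triples $v_0,\dots,v_N$ are defined, since $r_N\ne0$ and $s_i,t_i$ are defined up to $i=N+1$ by \eqref{abb}).

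For the statement about $Y$ I would simply project onto the first two coordinates. Dropping the third coordinate in \eqref{rec} and \eqref{init} shows that the pairs $w_i:=(r_i(x),s_i(x))$ satisfy the very same recurrence with the very same transition matrix, while the base case $\{w_0,w_1\}=\{(f,0),(g,1)\}$ is already known to be a basis of $Y$ from the discussion following Proposition \ref{free}. The identical $\mathrm{GL}_2(\K[x])$ argument then gives that $\{w_i,w_{i+1}\}$ is a basis of $Y$ for $0\le i\le N-1$. Alternatively, one may note that the projection $(a,b,c)\mapsto(a,b)$ restricts to an isomorphism $\ker\varphi\xrightarrow{\sim}Y$, its inverse being $(a,b)\mapsto\bigl(a,b,(a-g\,b)/f\bigr)$, and transport the kernel basis through it.

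There is no genuine obstacle here; the proof is essentially bookkeeping. The one point that must be handled with care is the unit-determinant claim: it is what makes the change of basis reversible over the ring $\K[x]$ rather than merely over the field $\K(x)$, and hence what guarantees that a generating set maps to a generating set of the \emph{same} module and not just of a localization. Everything else---membership in the kernel, matching the initial data to Proposition \ref{free}, and the range of indices---is immediate from the recurrences \eqref{euc}, \eqref{abb}, \eqref{rec} and the initial conditions \eqref{init}.
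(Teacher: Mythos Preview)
Your proof is correct and follows essentially the same inductive scheme as the paper: the base case is Proposition~\ref{free}/\eqref{init}, and the inductive step replaces $v_i$ by $v_{i+2}$ via \eqref{rec}, which you make slightly more explicit by writing the $\mathrm{GL}_2(\K[x])$ transition matrix with determinant $-1$. For the statement about $Y$ the paper projects onto the first two coordinates (invoking Remark~\ref{unique}), which is exactly your alternative of transporting the basis through the isomorphism $\ker\varphi\xrightarrow{\sim}Y$.
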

\begin{proof}
By induction on $i,$ the case $i=0$ being given by \eqref{init}. For the general case, we just have to show that  one can replace the triplet
$(r_i(x),s_i(x),t_i(x))$ in the basis $\{(r_i(x),s_i(x),t_i(x)),\, (r_{i+1}(x),s_{i+1}(x),t_{i+1}(x))\}$ with $(r_{i+2}(x),s_{i+2}(x),t_{i+2}(x))$ and still generate the same kernel. But this follows straightforwardly thanks to \eqref{rec}, which proves the first part of the claim. The rest holds by projecting onto the first two coordinates the previous result,  and using Remark \ref{unique}.
\end{proof}
From \eqref{rec} we deduce that, for  $i=1,\dots, N,$
 \begin{multline}\label{pristra}\begin{pmatrix} \deg r_{i}(x) & \deg r_{i+1}(x)\\ \deg s_{i}(x) &\deg s_{i+1}(x) \end{pmatrix}=\\
\begin{pmatrix} n-(\deg q_1(x)+\cdots +\deg q_{i}(x)) &  n-(\deg q_1(x)+\cdots +\deg q_{i+1}(x))\\ \deg q_1(x)+\cdots +\deg q_{i-1}(x) &  \deg q_1(x)+\cdots +\deg q_{i}(x) \end{pmatrix} .\end{multline}

\begin{theorem}\label{mtt}
Let $i\in\{1,\ldots, N\}$ be such that
\begin{equation}\label{ristra} n-\sum_{j=1}^{i}\deg q_j(x)\geq \sum_{j=1}^{i-1}\deg q_j(x) \mbox{ and } \sum_{j=1}^{i}\deg q_j(x) \geq
 n- \sum_{j=1}^{i+1} \deg q_{j}(x)\end{equation}
or equivalently
\[ 2\left( \sum_{j=1}^{i-1} \deg q_j(x)\right) +\deg q_{i}(x) \leq n \leq  2\left( \sum_{j=1}^{i} \deg q_j(x)\right) +\deg q_{i+1}(x), \]
then   $\{(r_{i}(x),  s_{i}(x)),\,(r_{i+1}(x), s_{i+1}(x))\}$ is a minimal basis of $Y.$
\end{theorem}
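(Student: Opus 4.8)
The plan is to reduce the statement to two facts already in hand. By Proposition \ref{basis} the pair $\{(r_i(x),s_i(x)),(r_{i+1}(x),s_{i+1}(x))\}$ is a basis of $Y$, and by Remark \ref{any} a basis of $Y$ is automatically minimal as soon as its two $\delta$-degrees add up to $n$. Thus the only thing left to verify is that, under hypothesis \eqref{ristra},
\[
\max\{\deg r_i(x),\deg s_i(x)\}+\max\{\deg r_{i+1}(x),\deg s_{i+1}(x)\}=n.
\]

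First I would introduce the shorthand $D_i:=\sum_{j=1}^{i}\deg q_j(x)$, with $D_0=0$, so that the degree table \eqref{pristra} (equivalently Remark \ref{degrees}) reads $\deg r_i(x)=n-D_i$ and $\deg s_i(x)=D_{i-1}$. In these terms the two inequalities of \eqref{ristra} become exactly $n-D_i\geq D_{i-1}$ and $D_i\geq n-D_{i+1}$.

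The key step is then to read off which coordinate governs each $\delta$-degree. The first inequality is precisely $\deg r_i(x)\geq \deg s_i(x)$, so $\max\{\deg r_i(x),\deg s_i(x)\}=\deg r_i(x)=n-D_i$. The second inequality is precisely $\deg s_{i+1}(x)\geq\deg r_{i+1}(x)$, since $\deg s_{i+1}(x)=D_i$ and $\deg r_{i+1}(x)=n-D_{i+1}$, so $\max\{\deg r_{i+1}(x),\deg s_{i+1}(x)\}=\deg s_{i+1}(x)=D_i$. Adding these two maxima gives $(n-D_i)+D_i=n$, and the conclusion now follows from Remark \ref{any}.

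The place demanding care is the validity of the degree formulas $\deg r_i(x)=n-D_i$, $\deg s_i(x)=D_{i-1}$ at the extreme indices, and making sure the two maxima are genuinely attained by the claimed coordinate. Since we run the algorithm on $r_0=f$, $r_1=g$ with $\deg f=n>\deg g$, we have $\deg q_1(x)>0$, so every quotient degree entering the sums is positive and the strict monotonicity of $\{\deg r_i\}$, $\{\deg s_i\}$ underlying \eqref{pristra} holds; this is what prevents a tie that could spoil the count. The one boundary case to dispatch separately is $i=N$, where $r_{N+1}(x)=0$: here $\deg r_{N+1}(x)=-\infty$ makes the inequality $\deg s_{N+1}(x)\geq\deg r_{N+1}(x)$ automatic, so the same computation $\max=\deg s_{N+1}(x)=D_N$ goes through and the sum is again $n$. (That an index satisfying \eqref{ristra} exists at all is an easy separate observation: in the second form of \eqref{ristra} the upper endpoint $2D_i+\deg q_{i+1}(x)$ at step $i$ coincides with the lower endpoint $2D_i+\deg q_{i+1}(x)$ at step $i+1$, so these intervals tile an interval containing $n$.)
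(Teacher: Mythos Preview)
Your proof is correct and follows essentially the same approach as the paper: invoke Proposition~\ref{basis} for the basis property, use the degree table \eqref{pristra} together with hypothesis \eqref{ristra} to identify which coordinate realizes each maximum, observe that the two $\delta$-degrees then sum to $(n-D_i)+D_i=n$, and conclude via Remark~\ref{any}. Your treatment is in fact more careful than the paper's, which glosses over the boundary issue at $i=N$ (where $r_{N+1}=0$ and $q_{N+1}$ is not defined) that you handle explicitly.
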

\begin{proof}
From Proposition \ref{basis} we know that $\{(r_{i}(x),  s_{i}(x)),\,(r_{i+1}(x), s_{i+1}(x))\}$ is a  basis of $Y.$ For $j=0,1,$ let
$\delta_j=\max \{\deg r_{i+1-j}(x), \deg s_{i+1-j}(x) \}.$

From \eqref{pristra} and \eqref{ristra}, we deduce that
$$\delta_1+\delta_2=\left(n-\sum_{j=1}^{i}\deg q_j(x)\right)+\left(\sum_{j=1}^{i}\deg q_j(x)\right)=n.
$$
The claim now follows thanks to Remark \ref{any}.

\end{proof}

\begin{example}
\label{example1}
Consider
\[
\begin{array}{lll}
x_1=0, &y_{1,0}=-2 & \\
x_2=2, &y_{2,0}=6& \\
x_3=-1, &y_{3,0}=-3 \\
x_3=-1, & y_{3,1}=3 ,
\end{array}
\]
\end{example}

In this case, we have $N=3$ and the sequences of polynomials produced by the Euclidean Algorithm are
\begin{table}[h!]
  \begin{center}
    \begin{tabular}{c|c|c|c|c}
      $i$ & $0$ & $1$ & $2$ & $3$ \\
			\hline
      $r_i(x)$ &$x^4-3x^2-2x$ & $x^3-2$ & $-3x^2$ &$-2$   \\
      \hline
      $s_i(x) $& $0 $& $1$ & $-x$ &$\frac{-x^2}{3}+1$  \\
          \end{tabular}
  \end{center}
\end{table}

with $q_1(x)=x$, $q_2(x)=-\frac{x}{3} $, $q_3(x)=\frac{3x^2}{2}$.

So, $\mu_1=\mu_2=2,\, \{(r_2(x),s_2(x)),\,(r_3(x),s_3(x))\}$ is a minimal basis, the $\delta$- minimal degree is $2$ and the fractions $\frac{-2-3\lambda x^2}{\frac{-x^2}{3}+1-\lambda x}$ define the  minimal interpolating functions for all $\lambda\in \K$, $\lambda\neq -\frac{1}{6}, -\frac{2}{3}$.

The interpolating functions $y(x)=\frac{a(x)}{b(x)}$ with $\delta (y(x))=\delta >2$ can be parameterized as
\[
y(x)=\frac{(\lambda_0+\cdots +\lambda_{\delta-2}x^{\delta-2})r_2(x)+ (\lambda'_0+\cdots +\lambda'_{\delta-3}x^{\delta-3} + x^{\delta-2} )r_3(x)  }{(\lambda_0+\cdots +\lambda_{\delta-2}x^{\delta-2})s_2(x)+ (\lambda'_0+\cdots +\lambda'_{\delta-3}x^{\delta-3} + x^{\delta-2})s_3(x)}
\]
provided that $(\lambda_0+\cdots +\lambda_{\delta-2}x_i^{\delta-2})s_2(x_i)+ (\lambda'_0+\cdots +\lambda'_{\delta-3}x_i^{\delta-3} + x_i^{\delta-2})s_3(x_i) \neq 0$, for $i=1, 2, 3$.

The index $i$ in Theorem \ref{mtt} is not unique, although there are at most two possible choices for it as the following cautionary example shows.
\begin{example}
\label{example2}
Take the interpolating data $(1,1)$, $(-1,1)$, $(2,-14)$, $(-2,-14)$, $(3,1)$, $(-3,1)$. In this case, $N=3,$ and  the Euclidean Algorithm gives
\begin{table}[h!]
  \begin{center}
    \begin{tabular}{c|c|c|c|c}
      $i$ & $0$ & $1$ & $2$ & $3$ \\
			\hline
      $r_i(x)$ &$x^6-14x^4+49x^2-36 $ & $ x^4-10x^2+10 $ & $-x^2+4 $ &$-4$   \\
      \hline
      $s_i (x)$& $0 $& $1$ & $-x^2+4 $ &$ -x^4+2x^2-3$  \\
          \end{tabular}
  \end{center}
\end{table}
\noindent with $q_1(x)=x^2-4$, $q_2(x)=-x^2+6$, $q_3(x)=\frac{x^2-4}{14}$. So, $\mu_1=2$, $\mu_2=4$ and
 $\{(r_1(x),s_1(x)),\, (r_2(x),s_2(x))\}$ is a minimal basis. The $\delta$-minimal degree is $4,$ and
  \[\frac{x^4-10x^2+10 +(\lambda_0+\lambda_1x+\lambda_2x^2)(-x^2+4)}{1+(\lambda_0+\lambda_1x+\lambda_2x^2)(-x^2+4)} \]
   parameterize the minimal rational interpolating functions, provided that the denominators do not vanish in the $x_i's$.

Note that another minimal basis is be given by $\{(r_2(x),s_2(x)),\, (r_3(x),s_3(x))\}.$

\end{example}

\begin{example}
\rm{
In the generic case; that is, if all the quotients in the Euclidean Algorithm have degree one, then $N=n$, $\deg r_i(x)=n-i,$ and $\deg s_i(x)=i-1$ for $i=1,\dots N$. From Theorem \ref{mtt} we deduce straightforwardly that
\begin{enumerate}
\item[(a)] If $n=2k$ then $\{(r_k(x),s_k(x)),\, (r_{k+1}(x),s_{k+1}(x))\}$ is a minimal basis and the $\delta$-minimal degree is $k$.
\item[(b)] If $n=2k+1$ then {\bf both} $\{(r_k(x),s_k(x)), (r_{k+1}(x),s_{k+1}(x))\}$ and \newline $\{(r_{k+1}(x),s_{k+1}(x)), (r_{k+2}(x),s_{k+2}(x))\}$ are minimal bases, and the $\delta$-minimal degree is either $k$ or $k+1$.
\end{enumerate}
}
\end{example}

\begin{example}
\rm{
Consider the data $(-1,-3)$, $(0,-2)$, $(1,-1)$ and $(2,6)$. This is a generic case with an even number of pairs for $r_0(x)=f(x)=x^4-2x^3-x^2+2x$ and $r_1(x)=g(x)=x^3-2$.

\begin{table}[h!]
  \begin{center}
    \begin{tabular}{c|c|c|c|c|c}
      $i$ & $0$ & $1$ & $2$ & $3$ & $4$ \\
			\hline
      $r_i(x)$ &$x^4-2x^3-x^2+2x$ & $x^3-2$ & $-x^2+4x-4 $ & $12x-18 $ & $\frac{-1}{4} $  \\
      \hline
      $s_i(x) $& $0 $& $1$ & $-x+2$ &$ -x^2-2x+9$ &$\frac{-8x^2+4x+3}{24}$  \\
          \end{tabular}
  \end{center}
\end{table}

The family of functions $\frac{12x-18+\lambda (-x^2+4x-4)}{-x^2-2x+9 +\lambda (-x+2)}$ with $\lambda\neq \frac{-10}{3},\frac{-9}{2},-6$  interpolate with $\delta$-minimal degree equal to $2$.
}
\end{example}

\bigskip
\section{$\mu$-basis for polynomial parameterizations of plane curves}\label{mb}

The approach used to find a minimal basis for the $\delta$-degree actually allows us to compute a $\mu$-basis for the parametrization of a polynomial planar curve, and also characterize the value of $\mu$ in terms of the ``critical'' degree \eqref{ristra}  arising in the Extended Euclidean Algorithm. Definitions and basic properties and applications of $\mu$-basis, in the general setting of rational curves can be found in \cite{CSCh} (see also \cite{HHK17}). We briefly recall them here for polynomial parametrizations.

Let $r_0(x), r_1(x)\in\K[x],$ and consider the polynomial parametrization of an affine plane curve given by

\begin{equation}\label{param}
\begin{array}{ccc}
\K& \to &\K^2\\
t & \mapsto & (r_0(t),r_1(t)).
\end{array}
\end{equation}

Denote with $I$ the ideal in the polynomial ring $\K [ x,T_0,T_1 ]
$ defined by $T_0-r_0(x)$ and $T_1-r_1(x).$

Let $I_{i,j}$ the set of elements of $I$ with degree at most $i$  in $x$ and total degree at most $j$ in $T_0,T_1$. It is easy to see that a polynomial $a(x)T_0+b(x)T_1 +c(x)$ with $a(x), b(x), c(x)\in \K [x]$ is in $I_{\ast,1}$ if, and only if, $a(x)r_0(x)+b(x)r_1(x)+c(x)=0,$ i.e. $(a(x), b(x), c(x))$ is a syzygy of $(r_0(x), r_1(x), 1).$ This implies that there exists an isomorphism of $\K [x]$-modules
\[\begin{array}{cll}
I_{\ast,1} & \longrightarrow &\Syzy (r_0(x), r_1(x),1) \\
a(x)T_0+b(x) T_1 +c(x)   & \mapsto  & (a(x), b(x), c(x))
\end{array}
\]

Assume that $n=\deg r_0(x) \geq \deg r_1(x).$ Let $\mu$ the smallest integer such that  $I_{\mu,1}\neq 0.$ By homogeneizing and applying Hilbert Syzygy Theorem as in \eqref{HST},  there exist polynomials $p(x,T_0,T_1)\in I_{\mu ,1}$ and $q(x,T_0,T_1)\in I_{n-\mu ,1}$ such that every polynomial $a(x)T_0+b(x)T_1 +c(x)\in I_{*,1}$ can be written uniquely in the form
$$a(x)T_0+b(x)T_1 +c(x)=h_1(x)p(x,T_0,T_1)+h_2(x)q(x,T_0,T_1)$$ with $h_1(x),h_2(x)\in \K [x]$. The polynomials $p(x,T_0,T_1),\,q(x,T_0,T_1)$ are called a {\em $\mu$-basis} of the parametrization \eqref{param}.

Homogenizing \eqref{param}, we have a map
\begin{equation}\label{paramh}
\begin{array}{ccc}
\P^1& \to &\P^2\\
(t_0:t_1) & \mapsto & (r_0(t_0:t_1):r_1(t_0:t_1):t_1^n).
\end{array}
\end{equation}
whose image is a projective plane curve. We denote with
\begin{multline*}
 \mathcal{I}_{\ast ,1}= \{ a(x,z)T_0+b(x,z)T_1+c(x,z)T_2  \in \\
  \K[x,z,T_0,T_1,T_2]; a(x,z)r_0(x,z)+b(x,z)r_1(x,z)+c(x,z)z^n=0\}
  \end{multline*}
the $\K[x,z]$-submodule of {\em moving lines following the parametrization} \eqref{paramh}. It is easy to verify that there is an isomorphism of graded $\K[x,z]$-modules
\[\begin{array}{cll}
 \mathcal{I}_{\ast,1} & \longrightarrow &\Syzy (r_0(x,z), r_1(x,z),z^n ) \\
a(x,z)T_0+b(x,z)T_1+c(x,z)T_2    & \mapsto  & \big(a(x,z),b(x,z),c(x,z)\big) .
\end{array}
\]
Again by Hilbert Syzygy Theorem we deduce that  $\Syzy (r_0(x,z), r_1(x,z),z^n ) $ is a free module of rank $2,$ with a homogeneous basis $$p=(p_1(x,z),p_2(x,z),p_3(x,z)),\quad  q=(q_1(x,z),q_2(x,z),q_3(x,z))$$ of degrees $\mu$ and $n-\mu,$ where we assume $\mu \leq n-\mu$. By dehomogenizing this situation, we deduce that   $$\{p_1(x,1)T_0+p_2(x,1)T_1+p_3(x,1), q_1(x,1)T_0+q_2(x,1)T_1+q_3(x,1)\}$$ is a
$\mu$-basis of \eqref{param}.

As in Section \ref{EA}, let $r_i(x)$, $s_i(x)$, $t_i(x)$, for $i=0,\dots, N+1$ the sequences of polynomials from the Extended Euclidean Algorithm starting with $r_0(x)$ of degree $n$ and $r_1(x)$ of lower degree.  From \S \ref{ss}, we deduce that, $(t_i(x),s_i(x),-r_i(x))$ is a syzygy of $(r_0(x),r_1(x),1)$ for all $i=0,\ldots, N.$ The following  result  states the value of $\mu$ in terms of the degrees appearing in the Extended Euclidean Algorithm, and also extracts a $\mu$-basis of \eqref{param} from the sequence of remainders.
\begin{theorem}\label{mu}
There exists $i_{\mu}\in\{0,1,\ldots, N-1\}$ such that
$$\max \{\deg r_{i_{\mu}}(x), \deg s_{i_{\mu}}(x)\} + \max\{\deg r_{i_{\mu}+1}(x), \deg s_{i_{\mu}+1}(x)\} =n .$$
For this index, we have
$$\mu=\min \{\max \{ \deg r_{i_{\mu}}(x), \deg s_{i_{\mu}}(x)\}, \max\{\deg r_{i_{\mu}+1}(x), \deg s_{i_{\mu}+1}(x) \}\},$$
and  moreover $\{(t_{i_{\mu}}(x),s_{i_{\mu}}(x),-r_{i_{\mu}}(x))$, $(t_{i_{\mu}+1}(x),s_{i_{\mu}+1}(x),-r_{i_{\mu}+1}(x))\}$ is a $\mu$-basis of \eqref{param}.
\end{theorem}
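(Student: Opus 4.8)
The plan is to carry the machinery of Section~\ref{3} and \S\ref{ss} over verbatim to the module $\Syzy(r_0(x),r_1(x),1)$, observing that here the triples $(t_i(x),s_i(x),-r_i(x))$ produced by the Extended Euclidean Algorithm play exactly the role that $(r_i(x),s_i(x),t_i(x))$ played for the interpolation problem. First I would note that the initial conditions \eqref{init} give the obvious basis $\{(1,0,-r_0(x)),(0,1,-r_1(x))\}$ of $\Syzy(r_0(x),r_1(x),1)$, and that the recurrence \eqref{rec} lets one slide along the sequence exactly as in the proof of Proposition~\ref{basis}; hence $\{(t_i,s_i,-r_i),(t_{i+1},s_{i+1},-r_{i+1})\}$ is a basis of this module for every $0\le i\le N-1$.

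The next ingredient is a degree count. By an induction on \eqref{abb} identical to the one behind Remark~\ref{degrees} one gets $\deg t_i=\sum_{j=2}^{i-1}\deg q_j=\deg s_i-\deg q_1$, so that $\deg t_i\le\deg s_i$ and the degree of the syzygy $(t_i,s_i,-r_i)$, namely $\max\{\deg t_i,\deg s_i,\deg r_i\}$, collapses to $D_i:=\max\{\deg r_i,\deg s_i\}$ (this is the analogue of Lemma~\ref{lema}). Since the degree bookkeeping \eqref{pristra} is the same one governing Theorem~\ref{mtt}, the same critical index serves here: choosing $i_\mu$ to satisfy \eqref{ristra} gives $\deg r_{i_\mu}\ge\deg s_{i_\mu}$ and $\deg r_{i_\mu+1}\le\deg s_{i_\mu+1}$, whence $D_{i_\mu}=\deg r_{i_\mu}$, $D_{i_\mu+1}=\deg s_{i_\mu+1}$ and, by \eqref{pristra}, $D_{i_\mu}+D_{i_\mu+1}=\deg r_{i_\mu}+\deg s_{i_\mu+1}=n$. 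Such an index exists in $\{0,\dots,N-1\}$ because $\deg r_i$ decreases while $\deg s_i$ increases, so the inequality $\deg r_i\ge\deg s_i$ must flip.

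Finally I would identify this pair with a $\mu$-basis. Homogenizing $(t_i,s_i,-r_i)$ to degree $D_i$ produces a triple whose syzygy relation is homogeneous of degree $n+D_i$ and vanishes at $z=1$; since a homogeneous polynomial vanishing on $z=1$ is identically zero, the homogenized triple is a genuine element of $\Syzy(r_0(x,z),r_1(x,z),z^n)$ of degree $D_i$. Writing the two homogenized syzygies in terms of a fixed homogeneous $\mu$-basis of degrees $\mu\le n-\mu$, the change-of-basis matrix has determinant homogeneous of degree $(D_{i_\mu}+D_{i_\mu+1})-n=0$, hence a constant; as the dehomogenizations already form a $\K[x]$-basis (first paragraph), this constant is nonzero, so the two homogeneous syzygies form a basis. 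The uniqueness of the degree shifts in the resolution \eqref{HST} then forces $\{D_{i_\mu},D_{i_\mu+1}\}=\{\mu,n-\mu\}$, which yields $\mu=\min\{D_{i_\mu},D_{i_\mu+1}\}$ and exhibits $\{(t_{i_\mu},s_{i_\mu},-r_{i_\mu}),(t_{i_\mu+1},s_{i_\mu+1},-r_{i_\mu+1})\}$ as the desired $\mu$-basis. I expect the main obstacle to be precisely this passage between the affine EEA basis and the graded $\mu$-basis: checking that homogenization preserves the basis property and that the degree sum $n$ forces the realization of the invariants $\mu$ and $n-\mu$, with the edge cases $\deg r_0=\deg r_1$ (so $\deg q_1=0$) and $\deg r_1<n$ needing a little extra care.
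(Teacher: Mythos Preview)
Your proposal is correct and follows essentially the same route as the paper's proof: pick the critical index from \eqref{ristra}, use the EEA degree bookkeeping \eqref{pristra} to get the degree sum equal to $n$, observe that $\deg t_i\le\max\{\deg r_i,\deg s_i\}$, and conclude via the basis property coming from the recursion \eqref{rec}. The paper simply cites Theorem~\ref{mtt}, Lemma~\ref{lema}, and Proposition~\ref{basis} (noting implicitly that their proofs depend only on the abstract EEA structure of $r_0,r_1$, not on the particular interpolation data), whereas you re-derive these ingredients in place---computing $\deg t_i=\sum_{j\ge 2}\deg q_j$ directly rather than invoking Lemma~\ref{lema}, and making explicit the constant-determinant argument that underlies Remark~\ref{any}. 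Your extra care with the homogenization step and the edge case $\deg q_1=0$ is more than the paper provides, but the strategy is the same.
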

\begin{proof}
Choose as $i_\mu$ the one satisfying \eqref{ristra}, then the first part of the claim follows straightforwardly from Theorem \ref{mtt}.  For the rest, suppose without loss of generality that
$\mu=\max \{\deg r_{i_{\mu}}(x), \deg s_{i_{\mu}}(x)\}.$ From  Lemma \ref{lema} we deduce that
$$\mu=\max \{\deg t_{i_\mu}(x), \deg r_{i_{\mu}}(x), \deg s_{i_{\mu}}(x)\},$$ and also that
$n-\mu=\max \{\deg t_{i_\mu+1}(x), \deg r_{i_{\mu}+1}(x), \deg s_{i_{\mu}+1}(x)\}.$ The fact that \linebreak $\{(t_{i_{\mu}}(x),s_{i_{\mu}}(x),-r_{i_{\mu}}(x))$, $(t_{i_{\mu}+1}(x),s_{i_{\mu}+1}(x),-r_{i_{\mu}+1}(x))\}$ is a $\mu$-basis of \eqref{param} follows then straightforwardly from Proposition \ref{basis}.
\end{proof}

\begin{example}
{\rm For the planar affine curve  parameterized by $(6x^2-4x^4,4x-4x^3)$ we have

\begin{table}[h!]
  \begin{center}
    \begin{tabular}{c|c|c|c|c}
      $i$ & $0$ & $1$ & $2$ & $3$\\
			\hline
      $r_i(x)$ &$-4x^4+6x^2$ & $-4x^3+x$ & $2x^2$ & $4x$ \\
      \hline
      $s_i (x)$&  $0$ & $1$ & $-x$ & $1-2x^2$ \\
			\hline
      $t_i(x)$& $1$ & $0$ & $1$ & $2x$\\
          \end{tabular},
  \end{center}
\end{table}
We deduce then that
$\mu=2$ and that $\{T_0-xT_1-2x^2,\, 2xT_0+(1-2x^2)T_1-4x\}$ is a $\mu$-basis of the parametrization.

From the above, we have that for the projective planar curve parametrized by $(6x^2z^2-4x^4,4xz^2-4x^3,z^4)$, the free $\K[x,z]$-module of moving lines following the parametrization is generated  by $z^2T_0-xzT_1-2x^2T_2$ and $2xzT_0+(z^2-2x^2)T_1-4xzT_2$.
}
\end{example}

\begin{example}
{\rm For the planar affine curve parameterized by $(x^n,x^m)$ with $m\leq n,$ we have

\begin{table}[h!]
  \begin{center}
    \begin{tabular}{c|c|c|c}
      $i$ & $0$ & $1$ & $2$  \\
			\hline
      $r_i(x)$ &$x^n$ & $x^m$ & $0$ \\
      \hline
      $s_i(x) $&  $0$ & $1$ & $-x^{n-m}$ \\
			\hline
      $t_i(x)$& $1$ & $0$ & $1$
 \end{tabular}
  \end{center}
\end{table}
From this table we deduce that $\mu=\min(m,n-m),$ and $\{T_1-x^m,\,T_0-x^{n-m}T_1\}$ is a $\mu$-basis of the parametrization.

For the corresponding projective plane curve parametrized by $(x^n,x^mz^{n-m},z^n)$, the module of moving lines following the parametrization is generated by $z^mT_1-x^mT_2$ and $z^{n-m}T_0-x^{n-m}T_1$.
}
\end{example}

\section{The $\kappa$-degree}\label{ccappa}
Here we use again  the notations of Section \ref{EA}, and set $r_0(x)=f(x)$ and $r_1(x)=g(x)$. The following result is obtained straightforwardly from the characterization given in \eqref{eq} of the interpolating functions, and Lemma \ref{emes}.

\begin{theorem}
\label{kappa}
For every rational function $y(x)$ satisfying \eqref{interpolant}, there exists a unique set of polynomials $m_0(x),\dots, m_{N+1}(x)$ such that
\begin{equation}\label{yy}
y(x)=\frac{\sum_{i=0}^{N+1} m_i(x)r_i(x)}{\sum_{i=0}^{N+1} m_i(x)s_i(x)},\mbox{ with } \deg m_i(x) <\deg q_i(x) \mbox{ for } i=1,\dots,N \end{equation}
and $\sum_{i=0}^{N+1} m_i(x_j)s_i(x_j)\neq 0$ for $j=1,\dots l$.
\end{theorem}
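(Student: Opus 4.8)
The plan is to read the statement straight off Lemma~\ref{emes} applied to a reduced representation of $y(x)$. First I would use Proposition~\ref{above}, together with the observation following it, to fix coprime $a(x),b(x)\in\K[x]$ with $y(x)=\frac{a(x)}{b(x)}$. Since $y(x)$ satisfies \eqref{interpolant}, the pair $(a(x),b(x))$ satisfies the weak interpolation conditions, so \eqref{eq} provides a (uniquely determined) polynomial $c(x)=\frac{a(x)-g(x)b(x)}{f(x)}$ with $a(x)=g(x)b(x)+c(x)f(x)$. Recalling the choice $r_0(x)=f(x)$, $r_1(x)=g(x)$, this is exactly the relation $a(x)=r_1(x)b(x)+r_0(x)c(x)$ demanded by Lemma~\ref{emes}.

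Next I would verify the hypothesis $\deg r_0(x)>\deg r_1(x)$ of that lemma, which holds because $\deg f(x)=n$ while $\deg g(x)\le n-1$. Applying Lemma~\ref{emes} to the triple $(a(x),b(x),c(x))$ yields unique polynomials $m_0(x),\dots,m_{N+1}(x)$ with $\deg m_i(x)<\deg q_i(x)$ for $i=1,\dots,N$ and
\[
\begin{pmatrix} a(x)\\ b(x)\\ c(x)\end{pmatrix}=\sum_{i=0}^{N+1}m_i(x)\begin{pmatrix} r_i(x)\\ s_i(x)\\ t_i(x)\end{pmatrix}.
\]
Comparing the first two coordinates gives $a(x)=\sum_{i}m_i(x)r_i(x)$ and $b(x)=\sum_{i}m_i(x)s_i(x)$, so that $y(x)=\frac{\sum_i m_i(x)r_i(x)}{\sum_i m_i(x)s_i(x)}$, which is \eqref{yy}. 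The remaining condition is automatic: since $y(x)$ is a genuine interpolating function, and not merely a solution of the weak conditions, we have $b(x_j)\neq0$ for $j=1,\dots,l$, and $b(x_j)=\sum_i m_i(x_j)s_i(x_j)$.

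The uniqueness, which is the only point that needs care, follows from the uniqueness clause of Lemma~\ref{emes} once it is understood relative to the reduced fraction: any admissible family $(m_i(x))$ recovers, via its third coordinate $\sum_i m_i(x)t_i(x)$, a triple in the kernel of $\varphi$ of the form $(A(x),B(x),C(x))$ with $A(x)=r_1(x)B(x)+r_0(x)C(x)$ and $A(x)/B(x)=y(x)$, and Lemma~\ref{emes} then determines $(m_i(x))$ uniquely from $(A,B,C)$. The subtlety I would flag is that a rational function also admits the representations $(A,B)=(a(x)h(x),b(x)h(x))$ for any $h(x)$ nonvanishing at the $x_j$'s, each producing a distinct family through Lemma~\ref{emes}; the requirement $\sum_i m_i(x_j)s_i(x_j)\neq0$ excludes only those $h(x)$ that vanish at some $x_j$, so genuine uniqueness holds precisely when the numerator and denominator are taken coprime (equivalently, up to the harmless common scalar in the reduced form). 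I expect this bookkeeping about the representation to be the main, and essentially the only, obstacle; the rest is a direct transcription of Lemma~\ref{emes}, in line with the authors' remark that the result follows straightforwardly from \eqref{eq} and that lemma.
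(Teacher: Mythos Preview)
Your proposal is correct and follows precisely the route the paper indicates: the authors' entire proof is the sentence ``obtained straightforwardly from the characterization given in \eqref{eq} of the interpolating functions, and Lemma~\ref{emes},'' and you have unpacked exactly that---write $y(x)=a(x)/b(x)$, use \eqref{eq} to produce $c(x)$, check $\deg r_0(x)>\deg r_1(x)$, and invoke Lemma~\ref{emes}. Your observation that uniqueness only holds relative to a fixed (say, reduced) representative of the fraction is a genuine subtlety that the paper does not address; it is a fair point, since scaling $(a,b,c)$ by any polynomial $h(x)$ with $h(x_j)\neq 0$ yields a different admissible family $(m_i)$, so the ``unique set'' in the statement should be read modulo this choice.
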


Now we are ready to present the main result of our section, which recovers   \cite[Corollary 3.5]{ant88}.
\begin{theorem}\label{kappas}
The set of admissible $\kappa$ 's is
\begin{equation}\label{formula}
\{n-\deg q_k(x),\,s_k(x_i)\neq0 \,\forall i=1,\ldots, l\}\cup\{j\geq n\}.
\end{equation}
The minimal of the admissible degrees is
\begin{equation}\label{inim}
\min_k\{n-\deg q_k(x),\,s_k(x_i)\neq0 \,\forall i=1,\ldots, l\}.
\end{equation}
\end{theorem}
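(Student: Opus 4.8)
The plan is to read the admissible $\kappa$-degrees directly off the constrained representation of Theorem \ref{kappa} (equivalently Lemma \ref{emes}), but applied to the \emph{reduced} pair, and the engine will be a ``no cancellation'' degree estimate. Using Remark \ref{degrees}, the degrees $\deg r_i(x)$ are strictly decreasing and $\deg s_i(x)$ strictly increasing in $i$, with gaps $\deg r_{i-1}(x)-\deg r_i(x)=\deg q_i(x)=\deg s_{i+1}(x)-\deg s_i(x)$. Hence for any triple $(A,B,C)=\sum_i m_i(r_i,s_i,t_i)$ obeying $\deg m_i<\deg q_i$ ($1\le i\le N$), the bound $\deg(m_i r_i)<\deg r_{i-1}$ forces the smallest-index nonzero term to dominate $A=\sum_i m_i r_i$ without cancellation, while $\deg(m_i s_i)<\deg s_{i+1}$ forces the largest-index nonzero term to dominate $B=\sum_i m_i s_i$. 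Writing $i_*$ and $i^*$ for the minimal and maximal indices with $m_i\neq 0$, I obtain the exact equalities $\deg A=\deg m_{i_*}+\deg r_{i_*}$ and $\deg B=\deg m_{i^*}+\deg s_{i^*}$ (the boundary terms are harmless, since $s_0=0$ and $r_{N+1}=0$ force $i^*\ge 1$ when $B\neq 0$ and $i_*\le N$ when $A\neq 0$).

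First I would record the identity $\deg r_k(x)+\deg s_k(x)=n-\deg q_k(x)$ for $1\le k\le N$, immediate from Remark \ref{degrees}. I would also note that $\gcd(r_k(x),s_k(x))$ divides $f(x)$: from $r_k=g s_k+f t_k$ and the Euclidean identity $s_{k-1}t_k-s_k t_{k-1}=\pm 1$ (whence $\gcd(s_k,t_k)=1$), any common divisor of $r_k,s_k$ divides $f t_k$ and so divides $f$. Thus whenever $s_k(x_i)\neq 0$ for all $i$, the fraction $r_k(x)/s_k(x)$ is reduced, lies in $Y$ with non-vanishing denominator (hence interpolates), and has $\kappa$-degree exactly $n-\deg q_k(x)$; this shows every value in the first set of \eqref{formula} is admissible. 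For the tail $\{j\ge n\}$ I would use the polynomials $y(x)=p(x)f(x)+g(x)$ with $\deg p(x)=j-n\ge 0$: these interpolate (they are $\equiv g \bmod f$, with denominator $1$) and, as $\deg(pf)>\deg g$, are reduced of $\kappa$-degree $\deg p(x)+n=j$.

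The heart is the converse inclusion. Take any interpolating $y(x)=a(x)/b(x)$ in lowest terms and apply Lemma \ref{emes} to its triple $(a,b,c)$, which is primitive since $\gcd(a,b)=1$: I get $(a,b,c)=\sum_i \tilde m_i(r_i,s_i,t_i)$ with $\deg \tilde m_i<\deg q_i$. Applying the no-cancellation estimate \emph{to this reduced triple} gives $\kappa(y)=\deg a+\deg b=\deg \tilde m_{i_*}+\deg \tilde m_{i^*}+\deg r_{i_*}+\deg s_{i^*}$. If the support is a single index $k$ (so $i_*=i^*=k$), primitivity forces $\tilde m_k$ to be a nonzero constant, so $y=r_k/s_k$; the condition $b(x_i)\neq 0$ forces $s_k(x_i)\neq 0$, and $\kappa(y)=n-\deg q_k(x)$. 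If instead $i_*<i^*$, then $\deg r_{i_*}+\deg s_{i^*}=n+\sum_{j=i_*+1}^{i^*-1}\deg q_j(x)\ge n$, so $\kappa(y)\ge n$. This dichotomy matches \eqref{formula} exactly; and since every value $n-\deg q_k(x)$ is $<n$ while the first set is nonempty (take $k=1$, where $s_1\equiv 1$), the minimum over the whole admissible set is \eqref{inim}.

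I expect the main obstacle to be the temptation to argue with the \emph{unreduced} representation $\sum_i m_i(r_i,s_i,t_i)$ of $y$ furnished by Theorem \ref{kappa}: there one is forced to control $\deg\gcd\!\bigl(\sum_i m_i r_i,\sum_i m_i s_i\bigr)$ before passing to $\kappa(y)$, which is awkward because the constrained representation of the reduced triple is \emph{not} the constant multiple of the representation of the full triple. Feeding Lemma \ref{emes} the already-reduced triple $(a,b,c)$ sidesteps the gcd entirely and makes the degree count exact. The remaining delicate points are purely book-keeping: the absence of a degree constraint at $i=0,N+1$, and the degenerate case $a\equiv 0$, both dispatched by the observations that $b\neq 0$ gives $i^*\ge 1$ and $a\neq 0$ gives $i_*\le N$.
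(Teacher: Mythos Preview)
Your proof is correct and follows essentially the same strategy as the paper's: write an interpolating $y$ via the constrained Euclidean representation, use the ``no-cancellation'' degree estimate to identify the extremal indices $i_*\le i^*$, and split into the cases $i_*=i^*$ (which yields $\kappa=n-\deg q_k$) and $i_*<i^*$ (which yields $\kappa\ge n$). The paper does exactly this (its \eqref{cappa} and \eqref{llist} are your estimate and case split).

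You make two points more explicit than the paper does, and both are genuine improvements. First, you insist on applying Lemma~\ref{emes} to the \emph{reduced} triple $(a,b,c)$ rather than invoking Theorem~\ref{kappa} as a black box; this is what makes \eqref{cappa} an equality for $\kappa(y)$ rather than an upper bound, and the paper relies on this implicitly (the uniqueness claim in Theorem~\ref{kappa} only makes sense for a fixed representative). Second, your observation that $\gcd(r_k,s_k)\mid f$ (via $r_k=gs_k+ft_k$ and $\gcd(s_k,t_k)=1$) cleanly establishes the forward inclusion ``$s_k(x_i)\neq 0\ \forall i\Rightarrow n-\deg q_k$ is admissible,'' a direction the paper's proof does not spell out. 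Finally, your construction for $j\ge n$ using the polynomials $p(x)f(x)+g(x)$ is simpler than the paper's, which builds irreducible fractions $\frac{m_0 f+r_k+\lambda r_{k+1}}{s_k+\lambda s_{k+1}}$ and appeals to a resultant argument for irreducibility.
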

\begin{proof}

Given an interpolating rational function we write it, by using Theorem \ref{kappa}, as  $y(x)=\frac{\sum_{i=0}^{N+1} m_i(x)r_i(x)}{\sum_{i=0}^{N+1} m_i(x)s_i(x)}$, with $\deg m_i(x) <\deg q_i(x)$ for  $i=1,\dots,N$.

The sequence of degrees $\{ \deg (m_i(x)r_i(x)),\, m_i(x)\neq 0 \}_{i=0,\dots N+1}$ is decreasing while
 $\{ \deg (m_i(x)s_i(x)),\, m_i(x)\neq 0 \}_{i=0,\dots N+1}$ is increasing. So, if we consider the indexes $k=\min \{i \mbox{ such that } m_i(x)\neq 0\}$ and  $k'=\max \{i \mbox{ such that } m_i(x)\neq 0\}$, then
\begin{equation}\label{cappa} \kappa ( y(x))=(\deg r_k(x)+\deg m_k(x))+(\deg s_{k'}(x) +\deg m_{k'}(x)). \end{equation}
In particular, if we want to minimize this quantity, we may assume both that $m_k(x)$ and $m_{k'}(x)$ are nonzero elements of $\K$.
From \eqref{cappa} and Remark \ref{degrees} we deduce that
$$
\deg r_k(x) + \deg s_{k'}(x) = (n-\deg q_1(x)-\cdots-\deg q_k(x))+(\deg q_{1}(x)+ \cdots +\deg q_{k'-1}(x)).$$
This quantity is equal to
\begin{equation}\label{llist}
\left\{\begin{array}{lcl}
n+(\deg q_{k+1}(x)+\cdots+\deg q_{k'-1}(x)) &\mbox{if} & k'> k+1 \\
n &\mbox{if} & k'=k+1 \\
n-\deg q_k(x) & \mbox{if} &k'=k.
\end{array}
\right.\end{equation}
From here we deduce straightforwardly that the minimum gets reached when $k'=k.$

The set defining \eqref{inim} is never empty as $\frac{r_1(x)}{s_1(x)}=\frac{g(x)}{1},$ the Hermite interpolation polynomial is always an element of it. Let $k_0\leq N$ be the minimum of \eqref{inim}. In particular, $k_0\leq \deg g(x)<n$.

To prove that any degree $j\geq n$ is feasible,  we proceed as follows: choose any two consecutive indexes $1\leq k< k+1\leq N,$ and let $\lambda\in\K\setminus\{0\}$ be such that
$$\frac{r_k(x)+\lambda r_{k+1}(x)}{s_k(x)+\lambda s_{k+1}(x)}
$$
is irreducible (i.e. the numerator and the denominator do not share any common factor) and the denominator does not vanish in $x_i,\, i=1,\ldots, l.$ This can be done as in the proof of Theorem \ref{nininal}. Then, due to \eqref{llist}, we actually have that  the $\kappa$-degree of this function is $n$. For $j>n$, we choose $m_0(x)$ a general polynomial of degree
$j-\big(n-\deg(s_k(x)+\lambda s_{k+1}(x))\big)$ such that the fraction
$$\frac{m_0(x)r_0(x)+r_k(x)+\lambda r_{k+1}(x)}{m_0(x)s_0(x)+s_k(x)+\lambda s_{k+1}(x)}=\frac{m_0(x)f(x)+r_k(x)+\lambda r_{k+1}(x)}{s_k(x)+\lambda s_{k+1}(x)}
$$
is irreducible. Then, it is straightforward to check that the $\kappa$-degree of this function is equal to $j$, which concludes with the proof of the Theorem.
\end{proof}

As another application of Theorem \ref{kappas}, we obtain a description of the solutions of the rational Hermite interpolation problem   (see \cite[Theorem 2.6]{CDM18} and \cite[Exercise 5.42]{vzgg13}). Recall that this problem consists in, for a given $d\in\N,\, 0\leq d<n,$ decide if there exist, and if so compute, polynomials
$a(x),b(x)$ of degrees bounded by $d$ and $n-d-1$ respectively, such that $\frac{a(x)}{b(x)}$ interpolates the data.
 Observe also that the rational Hermite interpolation problem can be phrased as to decide if there exist, and if so compute, polynomials
$a(x),b(x)$ such that $\frac{a(x)}{b(x)}$ interpolates the data with $\deg a(x) \leq d$ and $\kappa (\frac{a(x)}{b(x)} )\leq n-1$. Observe also that this is equivalent, by Proposition \ref{above}, to decide if there exist such a coprime pair of polynomials $\big(a(x)$, $b(x)\big)$.

\begin{corollary}
For $0\leq d \leq n-1$, let $1\leq k \leq N$ such that
\begin{multline*}
\deg r_{k}(x)  =n-( \deg q_1(x) +\cdots + \deg q_k(x)) \leq d <\\
 \deg r_{k-1}(x)= n-(\deg q_1(x) +\cdots + \deg q_{k-1}(x)).
\end{multline*}

If there is a solution of the  Hermite interpolation problem for the integer $d,$ then it is of the form $\frac{p(x)r_k(x)}{p(x)s_k(x)}$ for some $p(x)\in\K[x]$. The problem is solvable if and only if $r_k(x)$ and $s_k(x)$ are coprime.
\end{corollary}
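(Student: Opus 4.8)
The plan is to take an arbitrary Hermite solution, put it in lowest terms, expand it through Lemma \ref{emes}, and then let the two degree bounds force the expansion onto a single index of the Euclidean sequence, which will turn out to be exactly $k$.

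First I would use Proposition \ref{above} to assume the solution $y(x)=\frac{a(x)}{b(x)}$ has $a(x),b(x)$ coprime; then $\deg a\le d$ and $\deg b\le n-d-1$ give $\kappa(y)=\deg a+\deg b\le n-1$. Since $y$ interpolates, \eqref{eq} provides $c(x)$ with $a=gb+fc$, and Lemma \ref{emes} applied to $(a,b,c)$ yields unique $m_0,\dots,m_{N+1}$ with $\deg m_i<\deg q_i$ for $1\le i\le N$ and, reading the first two coordinates of the decomposition, $a=\sum_i m_ir_i$ and $b=\sum_i m_is_i$ \emph{exactly}. The crucial point, which I would stress, is that here the numerator and denominator are $a$ and $b$ themselves, with no spurious common factor to keep track of.

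Next I would collapse the support of the $m_i$. Since $m_0\neq0$ would force $\deg a\ge\deg(m_0 f)\ge n>d$, the index $0$ does not appear, so $m_0=0$; let $k_0$ and $k_0'$ be the smallest and largest indices with $m_i\neq0$. Using Remark \ref{degrees} one checks that over the support the degrees $\deg(m_ir_i)$ strictly decrease while the degrees $\deg(m_is_i)$ strictly increase, whence $\deg a=\deg r_{k_0}+\deg m_{k_0}$ and $\deg b=\deg s_{k_0'}+\deg m_{k_0'}$. If $k_0<k_0'$, then by Remark \ref{degrees} $\deg r_{k_0}+\deg s_{k_0'}\ge\deg r_{k_0}+\deg s_{k_0+1}=n$, forcing $\deg a+\deg b\ge n$ and contradicting $\kappa(y)\le n-1$. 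Hence $k_0=k_0'=:m$, so $a=m_mr_m$ and $b=m_ms_m$; coprimality of $(a,b)$ then forces $m_m$ to be a nonzero constant and $\gcd(r_m,s_m)=1$, i.e. $y=\frac{r_m}{s_m}$.

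It remains to identify $m$ with $k$ and to settle solvability. From $\deg a=\deg r_m\le d$ together with $\deg b=\deg s_m=n-\deg r_{m-1}\le n-d-1$ I obtain $\deg r_m\le d<\deg r_{m-1}$, so $m=k$ by the strict monotonicity of the $\deg r_i$; thus every solution has the stated form $\frac{p\,r_k}{p\,s_k}$ and, in particular, $\gcd(r_k,s_k)=1$ is necessary. For the converse I would argue that if $\gcd(r_k,s_k)=1$ then $s_k(x_i)\neq0$ for every $i$: otherwise $r_k=gs_k+ft_k$ together with $f(x_i)=0$ would give $r_k(x_i)=0$, so $(x-x_i)$ would divide both $r_k$ and $s_k$. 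Hence $\frac{r_k}{s_k}$ genuinely interpolates, and its degrees meet the required bounds by the defining inequality of $k$, so the problem is solvable. I expect the main obstacle to be the support‑collapse step: getting there cleanly depends on the exactness of the Lemma \ref{emes} expansion (no hidden common factor $h$) and on correctly translating the two separate degree bounds on $a$ and $b$ into the single interval $\deg r_k\le d<\deg r_{k-1}$ through the identity $\deg s_m+\deg r_{m-1}=n$.
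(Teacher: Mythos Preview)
Your argument is correct and follows essentially the same route the paper takes: the Corollary is presented as a direct application of Theorem~\ref{kappas}, whose proof already carries out the Lemma~\ref{emes} expansion, identifies the extreme support indices $k,k'$, and uses the degree identity \eqref{llist} to force $k=k'$ when $\kappa<n$. You have simply unpacked that computation and added the identification $m=k$ via $\deg s_m+\deg r_{m-1}=n$ and the solvability direction via the implication $s_k(x_i)=0\Rightarrow r_k(x_i)=0$; both are exactly what the paper's setup yields.
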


\bigskip

\begin{example}
{\rm For the data in Example \ref{example1} have $n=4$, $N=3$, $\deg q_1(x)=\deg q_2(x)=1$, $\deg q_3(x)=2$. The minimal $\kappa$-degree is 2 and the minimal solution is $\frac{6}{x^2-3}$. The Hermite polynomial is the other interpolating rational function of degree less than 4 and has degree 3. The Hermite interpolation problem has no solution for $d=2$.}
\end{example}

\begin{example}
{\rm For the data in Example \ref{example2}, $n=6$, the minimal $\kappa$-degree is 4  and the minimal solutions are $x^4-10x^2+10$ and $\frac{4}{x^4-2x^2+3}$. There are no other solutions of degree less than 6. The Hermite interpolation problem has no solution for $d=2$ and $d=3$.}
\end{example}

\begin{example}
{\rm In the generic case with a data of $n$ pairs the minimal $\kappa$-degree is $n-1$ and $g(x)$ is a minimal solution. The fractions, $\frac{r_i}{s_i}$, for $i=1,\dots, n$, have  degree $n-1$ and are minimal solutions if $r_i$ and $s_i$ are coprime.}
\end{example}

\bigskip
 \bibliographystyle{alpha}

\end{document}